\newtheorem{theorem}{Theorem}[section]
\newtheorem{lemma}[theorem]{Lemma}
\newtheorem*{theorem*}{Theorem}
\newtheorem{corollary}[theorem]{Corollary}
\newtheorem* {question*}{Question}
\newtheorem{sublemma}{}[theorem]
\theoremstyle{definition}
\theoremstyle{remark}
\numberwithin{equation}{section}
\newcommand{\si}{{\rm si}}
\newcommand{\cl}{{\rm cl}}
\begin{document}

\title[$GF(q)$-Chordal Matroids II]{Chordal Matroids arising from Generalized parallel connections II}

\author{James Dylan Douthitt}
\address{Mathematics Department\\
Louisiana State University\\
Baton Rouge, Louisiana}
\email{jdouth5@lsu.edu}

\author{James Oxley}
\address{Mathematics Department\\
Louisiana State University\\
Baton Rouge, Louisiana}
\email{oxley@math.lsu.edu}

\subjclass{05B35, 05C75}
\date{\today}
\keywords{chordal graph, parallel connection, projective geometry}

\begin{abstract}
In 1961, Dirac showed that chordal graphs are exactly the graphs that can be constructed from complete graphs by a sequence of clique-sums. In an earlier paper, by analogy with Dirac's result, we introduced the class of $GF(q)$-chordal matroids as those matroids that can be constructed from projective geometries over $GF(q)$ by a sequence of generalized parallel connections across projective geometries over $GF(q)$. Our main result showed that when $q=2$, such matroids have no induced minor in $\{M(C_4),M(K_4)\}$. In this paper, we show that the class of $GF(2)$-chordal matroids coincides with the class of binary matroids that have none of $M(K_4)$, $M^*(K_{3,3})$, or $M(C_n)$ for $n\geq 4$ as a flat. We also show that $GF(q)$-chordal matroids can be characterized by an analogous result to Rose's 1970 characterization of chordal graphs as those that have a perfect elimination ordering of vertices.
\end{abstract}

\maketitle

\section{Introduction}
\label{introduction}
The notation and terminology in this paper will follow \cite{diestel} for graphs and \cite{ox1} for matroids. Unless stated otherwise, all graphs and matroids considered here are simple. Thus every contraction of a set from a matroid is immediately followed by the simplification of the resulting matroid. We will also assume all matroids are binary unless otherwise specified. Following Cordovil, Forge, and Klein \cite{cordovil}, we define a simple or non-simple matroid $M$ to be \textit{chordal} if, for each circuit $D$ that has at least four elements, there are circuits $D_1$ and $D_2$ and an element $e$ such that $D_1\cap D_2 =\{e\}$ and $D=(D_1 \cup D_2)-e$. Therefore, a simple binary matroid is chordal precisely when it has no member of $\{M(C_n):n\geq 4\}$ as a flat, where $C_n$ is the $n$-edge cycle. 
In Section~\ref{ChordalMatroids}, we prove an assertion made in \cite{douthox} that the class of such matroids coincides with the class of binary matroids with no $M(C_4)$ as an induced minor, where an \textit{induced minor} of a matroid $M$ is any matroid that can be obtained from $M$ by a sequence of contractions and restrictions to flats. This matroid notion is analogous to a graph notion, an induced minor of a graph $G$ being a graph that can be obtained from $G$ by a sequence of vertex deletions and edge contractions.

For a prime power $q$, we denote the projective geometry $PG(r-1,q)$ by $P_r$ when context makes the field clear. A matroid is \textit{$GF(q)$-chordal} if it can be obtained by repeated generalized parallel connections across projective geometries over $GF(q)$ starting with projective geometries over $GF(q)$. In Section \ref{GF(2)-chord}, we prove the next theorem, which is the main result of the paper, and we give an analogous result for $q>2$. The equivalence of (i) and (ii) was shown in \cite{douthox}.
\begin{theorem}\label{noC4K4}
    The following are equivalent for a binary matroid $M$.
    \begin{enumerate}[label=\normalfont(\roman*)]
        \item $M$ is $GF(2)$-chordal.
        \item $M$ has no member of $\{M(C_4),M(K_4)\}$ as an induced minor.
        \item $M$ has no member of $\{M(C_n):n\geq 4\}\cup \{M(K_4),M^*(K_{3,3})\}$ as an induced restriction.
    \end{enumerate}
\end{theorem}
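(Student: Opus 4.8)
The plan is to take the equivalence of (i) and (ii) as given and to establish the two implications (ii)$\Rightarrow$(iii) and (iii)$\Rightarrow$(ii). Throughout I will use the Section~\ref{ChordalMatroids} result that, for a binary matroid, being chordal (having no $M(C_n)$ with $n\ge4$ as a flat) is the same as having no $M(C_4)$ as an induced minor; this disposes of all the cycle terms at once. Since a restriction to a flat is a particular induced minor, the real content of (ii)$\Leftrightarrow$(iii) is to compare the \emph{induced-minor} obstructions $\{M(C_4),M(K_4)\}$ with the \emph{flat} obstructions $\{M(C_n):n\ge4\}\cup\{M(K_4),M^*(K_{3,3})\}$.

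For (ii)$\Rightarrow$(iii) I argue contrapositively: I show that each matroid in the flat-obstruction list has a member of $\{M(C_4),M(K_4)\}$ as an induced minor, so that a forbidden flat in $M$ would produce, by transitivity of the induced-minor relation, a forbidden induced minor in $M$. For $M(C_n)$ with $n\ge4$ one contracts a single element repeatedly, using $\si(M(C_n)/e)=M(C_{n-1})$, to reach $M(C_4)$; the $M(K_4)$ term is immediate; and for $M^*(K_{3,3})$ one computes, via $M^*(K_{3,3})/e=(M(K_{3,3})\backslash e)^*$, that deleting an edge of $K_{3,3}$ creates two series pairs, hence two parallel pairs in the dual, so that $\si(M^*(K_{3,3})/e)$ is a simple rank-$3$ binary matroid on six elements, namely $M(K_4)$.

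The substance of the theorem is (iii)$\Rightarrow$(ii), which I also prove contrapositively. Assume (iii). The absence of every $M(C_n)$-flat makes $M$ chordal, hence $M$ has no $M(C_4)$ as an induced minor, which is half of (ii). It remains to show that $M$ has no $M(K_4)$ as an induced minor; so suppose it does. Writing the induced minor in the normal form $\si((M|F)/C)=M(K_4)$, with $F$ a flat of $M$ and $C\subseteq F$ (available because restriction to a flat and contraction can be interchanged), I pass to a minimal such witness. If $C=\emptyset$ then $M(K_4)$ is itself a flat of $M$ and (iii) fails; otherwise I reduce to the single-element case $\si(N/e)=M(K_4)$ with $N=M|F$ a chordal binary matroid of rank $4$ that we may assume has no $M(K_4)$-flat, and aim to force $N$ (a flat of $M$) to be $M^*(K_{3,3})$.

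The heart of the proof, and the step I expect to be the main obstacle, is this single-element coextension analysis. Since $N$ is simple and binary, two elements lie in a common parallel class of $N/e$ exactly when they span a line of $N$ through $e$; as a binary line has at most three points, every such class has size at most $2$, and if $j$ classes have size $2$ then $|E(N)|=7+j$. I then use chordality together with the hypothesis that $N$ has no $M(K_4)$-flat to pin down the number and position of these lines through $e$: the size-$1$ classes cannot be arranged so as to close up into an $M(K_4)$-flat, and the requirement that every $4$-circuit of $N$ have a chord forces exactly two lines through $e$, producing the triangle structure of the vertex-stars of $K_{3,3}$ and identifying $N$ with $M^*(K_{3,3})$. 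The delicate points are that contraction does not preserve chordality, so the reduction to a single element must be carried out within the flat lattice of $M$ rather than on contraction-minors, and that the final configuration count must rule out every chordal coextension except $M^*(K_{3,3})$. The analogous statement for $q>2$ will follow the same template, replacing the binary line bound by the $(q+1)$-point bound and enlarging the list of forbidden flats accordingly.
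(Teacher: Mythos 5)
Your direction (ii)$\Rightarrow$(iii) is correct, and your overall plan for (iii)$\Rightarrow$(ii) --- take a witness $\si((M|F)/C)\cong M(K_4)$ that is minimal, reduce to a single-element contraction, and analyse that coextension --- is in the same spirit as the paper's argument. But the reduction step has a genuine gap. When $|C|\geq 2$, contract all but one element to get $N'=\si((M|F)/(C-e))$ with $\si(N'/e)\cong M(K_4)$. Your coextension analysis (if completed) yields three outcomes for the rank-$4$ matroid $N'$: it has a circuit-flat (fine: then $M$ has $M(C_4)$ as an induced minor, and Theorem~\ref{noC4} contradicts (iii)); it has an $M(K_4)$-flat (fine: pulling that flat back to $M|F$ gives a witness with contraction set $C-e$, contradicting minimality); or $N'\cong M^*(K_{3,3})$. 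In this last case $M^*(K_{3,3})$ is only an \emph{induced minor} of $M$, not a flat of $M$, so it does not contradict (iii), and minimality does not help because the new, smaller witness certifies a different matroid. To finish, you must unwind the remaining contractions past $M^*(K_{3,3})$, which requires a further lemma: if $\si(N/e)$ has $M^*(K_{3,3})$ as a flat, then $N$ has one of $M(C_4)$, $M(K_4)$, $M^*(K_{3,3})$ as a flat. This is exactly the paper's Lemma~\ref{noK33}, whose proof is a substantial case analysis, and your proposal never addresses it; without it the induction cannot close. This is also why the paper's Lemma~\ref{ir=im} demands the single-element coextension property for \emph{every} member of $\mathcal{N}$, including $M^*(K_{3,3})$ and the circuits (Lemmas~\ref{circRes}, \ref{K4incontraction}, and \ref{noK33}), not just for the induced-minor-minimal members $M(C_4)$ and $M(K_4)$.

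A secondary concern: your single-element analysis of $\si(N/e)\cong M(K_4)$ is only a sketch, and its key claim --- that chordality plus the absence of an $M(K_4)$-flat forces exactly two long lines through $e$ --- is precisely the content that needs proof. For instance, $F_7^*$ satisfies $\si(F_7^*/e)\cong M(K_4)$ with \emph{no} long line through $e$, and excluding it requires exhibiting an $M(C_4)$-flat inside it; analogous checks are needed for every number $j$ of long lines through $e$ with $0\leq j\leq 6$, i.e., for $|E(N)|$ up to $13$. The paper's corresponding Lemma~\ref{K4incontraction} first bounds $|E(N)|\leq 9$ (Lemma~\ref{10less}) and then works through connectivity, the graphic case, Tutte's excluded-minor theorem, Seymour's lemma, and the Splitter Theorem. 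Your line-counting plan may well be completable, but as written it is an outline rather than a proof, and it sits on top of the structural gap described above.
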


In Section~\ref{GF(2)-chord}, we also prove the following analog of Theorem~\ref{noC4K4} for all other primes.

\begin{theorem}\label{gf(p)-chord}
    For each prime $p>2$, the following are equivalent for a $GF(p)$-representable  matroid $M$.
    \begin{enumerate}[label=\normalfont(\roman*)]
        \item $M$ is $GF(p)$-chordal.
        \item $M$ has no member of $\{U_{2,k}:3\leq k\leq p\}$ as an induced minor.
        \item $M$ has no member of $\{U_{n,n+1}: n\geq 2\}\cup \{U_{2+t,k+t}:4\leq k \leq p \text{ and } 0\leq t\leq p+1-k\}$ as an induced restriction.
    \end{enumerate}
\end{theorem}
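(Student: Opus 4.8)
The plan is to prove the cycle $(i)\Rightarrow(ii)\Rightarrow(iii)\Rightarrow(i)$, mirroring the proof of Theorem~\ref{noC4K4} but carrying the arithmetic of $GF(p)$-lines instead of only $2$- and $3$-point lines. Two facts organize everything. First, a structural lemma proved by induction on the generalized-parallel-connection construction: in a $GF(p)$-chordal matroid every rank-$2$ flat has exactly $2$ or $p+1$ points, because projective geometries over $GF(p)$ have only full lines and gluing two of them across a common projective-geometry flat creates only trivial new lines. Second, the MDS theorem for prime fields (Ball): since $M$ and all of its induced minors are $GF(p)$-representable, any uniform induced minor $U_{r,n}$ with $2\le r\le n-2$ satisfies $n\le p+1$. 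The second fact is what caps $t$ at $p+1-k$ in (iii), is the only place where the primality of $p$ is essential, and shows that the family in (iii) is exactly the set of $GF(p)$-representable uniform matroids that fail to be $GF(p)$-chordal. Arranging the argument as a cycle in this order also means the delicate step of lifting a partial-line induced minor to a uniform induced restriction comes for free from $(iii)\Rightarrow(i)$ and $(i)\Rightarrow(ii)$, rather than needing a separate proof.

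For $(i)\Rightarrow(ii)$ I would first establish that the class of $GF(p)$-chordal matroids is closed under contraction and under restriction to flats, hence under induced minors; this is the analog of the closure underlying the case $q=2$ in \cite{douthox}, and it comes from expressing minors of a generalized parallel connection through minors of its two parts. Given closure, it suffices that no partial line $U_{2,k}$ with $3\le k\le p$ is itself $GF(p)$-chordal, and this is immediate from the structural lemma, since such a matroid is a single rank-$2$ flat whose number of points lies strictly between $2$ and $p+1$. Hence no such $U_{2,k}$ is an induced minor of a $GF(p)$-chordal matroid.

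The implication $(ii)\Rightarrow(iii)$ is the routine half of the minor-to-restriction dictionary. Each matroid listed in (iii) has some $U_{2,k}$ with $3\le k\le p$ as an induced minor: contracting $n-2$ points of a circuit $U_{n,n+1}$ gives $U_{2,3}$, and contracting $t$ points of $U_{2+t,k+t}$ gives $U_{2,k}$. Because an induced restriction is in particular an induced minor and induced minors compose, the occurrence of any listed matroid as an induced restriction of $M$ would give $M$ a forbidden induced minor; taking the contrapositive yields $(ii)\Rightarrow(iii)$.

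The substantive direction is $(iii)\Rightarrow(i)$, which I would prove by the perfect-elimination-ordering route advertised for the analog with $q>2$. Here the MDS fact pays off: since the listed matroids exhaust the $GF(p)$-representable non-$GF(p)$-chordal uniform matroids, the hypothesis that $M$ has none of them as an induced restriction is equivalent to $M$ having no such uniform flat at all, so every line of $M$ is trivial or full, $M$ has no long-circuit flat, and $M$ has no thick uniform flat. I would then show that these purely uniform conditions already force a modular projective-geometry flat that can be peeled off, writing $M$ as a generalized parallel connection across a $GF(p)$-projective geometry, and induct. The hard part will be this extraction: finding the ``simplicial'' projective geometry and checking that conditions (iii) are inherited by the smaller pieces --- the genuine matroid analog of locating a simplicial vertex in Dirac's theorem. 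A notable subtlety special to $p>2$ is that no non-uniform obstructions (the analogs of $M(K_4)$ and $M^*(K_{3,3})$ in Theorem~\ref{noC4K4}) need to be excluded separately: any such configuration contains a $3$-point line $U_{2,3}$, which is already forbidden, so the absence of $3$-point lines must be leveraged to rule these out during the peeling.
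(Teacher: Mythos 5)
Your cycle has one sound easy arc and one missing hard arc. The implications $(i)\Rightarrow(ii)$ (closure under induced minors plus the observation that a $GF(p)$-chordal matroid has only $2$-point and $(p+1)$-point lines) and $(ii)\Rightarrow(iii)$ (each listed uniform matroid contracts to a forbidden $U_{2,k}$) are fine, and your use of Ball's theorem to explain the cap $t\leq p+1-k$ matches the paper's use of it. But $(iii)\Rightarrow(i)$, which you yourself flag as ``the hard part,'' is not proved: you assert that the absence of the listed uniform flats forces a modular projective-geometry flat that can be peeled off, and defer exactly the step that would justify this --- the extraction of the ``simplicial'' projective geometry and the verification that $(iii)$ passes to the pieces. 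That step is the entire difficulty of the theorem; as written, the proposal proves only that $(i)\Rightarrow(ii)\Rightarrow(iii)$, i.e., one direction of the equivalence. Note also that your justification that the list in $(iii)$ ``exhausts the $GF(p)$-representable non-$GF(p)$-chordal uniform matroids'' itself quietly invokes the characterization $(i)\Leftrightarrow(ii)$, so it cannot substitute for a proof of $(iii)\Rightarrow(i)$.

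The paper closes the loop by an entirely different route that avoids any peeling argument. It takes $(i)\Leftrightarrow(ii)$ as known from \cite{douthox}, and proves $(ii)\Leftrightarrow(iii)$ via Lemma~\ref{ir=im} together with the lifting lemmas, Lemmas~\ref{noU2k} and~\ref{noUrn}: if $M/e$ is isomorphic to a forbidden uniform matroid, then $M$ itself has a forbidden uniform matroid as an induced restriction. This is where the real work lies (case analysis on lines and hyperplanes avoiding $e$, using non-representability of $U_{2,q+2}$ and $U_{q,q+2}$), and it is precisely the content your plan claims ``comes for free'' from the cycle. Theorem~\ref{gf(p)-chord} is then a one-line consequence of Theorem~\ref{GF(q)ir} and Ball's theorem, which deletes $U_{3,p+2}$ and the non-representable $U_{2+t,k+t}$ from the lists when $p$ is an odd prime. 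To repair your proposal, the cleanest fix is to replace your sketched $(iii)\Rightarrow(i)$ with $(iii)\Rightarrow(ii)$ proved by induction on the contracted independent set, which requires exactly these lifting lemmas, and then invoke $(ii)\Rightarrow(i)$ from \cite{douthox}; attempting the direct Dirac-style peeling would amount to reproving the main theorem of \cite{douthox} in addition to the new material, with no argument currently supplied.
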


Chordal graphs have been characterized in several other ways apart from Dirac's \cite{dirac} description. A \textit{perfect elimination ordering of a graph} $G$ is an ordering of $V(G)$ such that, for every vertex $v$, the graph induced by $v$ and all of its neighbors that occur after $v$ in the ordering is a clique.  In 1970, Rose~\cite{rose} proved the following characterization.

\begin{theorem}\label{rosethm}
    A graph $G$ chordal if and only if $G$ has a perfect elimination ordering.
\end{theorem}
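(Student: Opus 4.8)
The plan is to prove the two implications separately. The reverse direction is routine, while the forward direction rests on the existence of simplicial vertices, where a vertex is \emph{simplicial} if its neighborhood induces a clique.

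For the implication that a perfect elimination ordering forces chordality, suppose $G$ has such an ordering $v_1,v_2,\dots,v_n$ and let $C$ be any cycle with at least four edges. Choose the vertex $v_i$ of $C$ having least index in the ordering. Its two neighbors on $C$ both occur after $v_i$, so by the defining property of the ordering these two neighbors are adjacent; since $C$ has length at least four, they are non-consecutive on $C$, and hence this adjacency is a chord of $C$. As $C$ was arbitrary, every cycle of length at least four has a chord, so $G$ is chordal.

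For the converse I would reduce everything to the claim that every chordal graph on at least one vertex has a simplicial vertex, and in fact two non-adjacent simplicial vertices unless it is complete. Granting this, the ordering is built greedily: extract a simplicial vertex, place it first, delete it (chordality passes to induced subgraphs), and iterate on the remainder. The resulting ordering $v_1,\dots,v_n$ satisfies the required condition because the later neighbors of $v_i$ are precisely its neighbors in the graph induced on $\{v_i,\dots,v_n\}$, in which $v_i$ was chosen to be simplicial.

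The heart of the argument, and the step I expect to be the main obstacle, is establishing the existence of simplicial vertices. The key intermediate fact is that in a chordal graph every minimal $a$--$b$ vertex separator $S$ induces a clique. Indeed, for any two vertices $x,y\in S$, minimality of $S$ forces each of them to have a neighbor in both components $A$ and $B$ of $G-S$; choosing shortest $x$--$y$ paths through $A$ and through $B$ and concatenating them yields a cycle of length at least four whose only possible chord (shortest paths are induced, and $A$, $B$ lie in different components) is the edge $xy$, forcing $xy\in E(G)$. With this fact in hand I would prove the existence of two non-adjacent simplicial vertices by induction on $|V(G)|$: if $G$ is complete every vertex works; otherwise pick non-adjacent $a,b$, a minimal $a$--$b$ separator $S$, and apply the inductive hypothesis to the smaller chordal graphs induced on $A\cup S$ and on $B\cup S$. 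Because $S$ is a clique, each of these subgraphs yields a simplicial vertex lying in $A$, respectively in $B$, and such a vertex remains simplicial in $G$ since all its neighbors stay within $A\cup S$ (respectively $B\cup S$). These two vertices lie in distinct components of $G-S$, hence are non-adjacent, which closes the induction and completes the proof.
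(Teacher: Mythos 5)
Your proposal is correct, but note that the paper itself contains no proof of Theorem~\ref{rosethm}: it is stated as background and attributed to Rose~\cite{rose}, and the paper's machinery is aimed instead at the matroid analog, Theorem~\ref{perfectisgfq}, whose proof (via generalized parallel connections across projective geometries) has nothing in common with a graph-theoretic argument. So the comparison is between your proof and the classical literature rather than anything internal to the paper. What you give is the standard Dirac--Rose argument, and it is sound: the easy direction via the least-indexed vertex of a cycle of length at least four, and the hard direction via the lemma that minimal $a$--$b$ separators in a chordal graph induce cliques, feeding an induction that produces two non-adjacent simplicial vertices unless the graph is complete. The one spot you compress is the induction step: applying the hypothesis to $G[A\cup S]$ yields either a complete graph (in which case any vertex of $A$ is simplicial in it) or two non-adjacent simplicial vertices, of which at least one must lie in $A$ since the clique $S$ cannot contain two non-adjacent vertices; this small case distinction should be spelled out, but it is routine and does not constitute a gap. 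It is also worth observing that your separator lemma may tacitly assume $xy\notin E(G)$ so that both shortest paths have internal vertices and the concatenated cycle has length at least four; since $xy\in E(G)$ is exactly the desired conclusion, that assumption is harmless and should simply be stated.
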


A \textit{perfect elimination ordering of cocircuits} of a matroid $M$ is a collection $C_1^*,C_2^*,\dots,C_r^*$ such that, for all $i$ in $[r]$, the set $C_i^*$ is a cocircuit of the matroid $M_i$, which is $M\backslash (C_1^*\cup C_2^*\cup \dots \cup C_{i-1}^*)$, and $M|\cl_{M_i}(C_i^*)$ is a projective geometry. In Section~\ref{Perfect Elimination Ordering}, we prove the following analog of Theorem~\ref{rosethm} for $GF(q)$-chordal matroids.

\begin{theorem}\label{perfectisgfq}
    A matroid $M$ is $GF(q)$-chordal if and only $M$ has a perfect elimination ordering of cocircuits.
\end{theorem}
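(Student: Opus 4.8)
The plan is to prove both implications through a single structural lemma that, for a cocircuit $C^*$ of $M$ with $M|\cl_M(C^*)$ a projective geometry, identifies $M$ as a generalized parallel connection in which $M|\cl_M(C^*)$ is glued onto $M\ba C^*$. Write $Q=M|\cl_M(C^*)$ and $N=\cl_M(C^*)\ba C^*$. Since $C^*$ is a cocircuit, $E(M)\ba C^*$ is a hyperplane of $M$, so $M\ba C^*=M|(E(M)\ba C^*)$; moreover $E(Q)=\cl_M(C^*)$ is a flat of $M$, and $N=(E(M)\ba C^*)\cap E(Q)$ is a flat of both $M\ba C^*$ and $Q$. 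As a flat of the projective geometry $Q$, the set $N$ is itself a projective geometry, and because every flat of a projective geometry is a modular flat, the generalized parallel connection $P_N(M\ba C^*,Q)$ is defined. The pivotal lemma I would establish is that $M=P_N(M\ba C^*,Q)$, and hence that $M$ is $GF(q)$-chordal if and only if $M\ba C^*$ is.

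Granting the pivotal lemma, the reverse implication follows by downward induction along a perfect elimination ordering $C_1^*,\dots,C_r^*$ that reduces $M$ to the empty matroid. The base matroid $M_r$ has $C_r^*$ as a cocircuit with $M_r\ba C_r^*$ empty, so $M_r$ is a projective geometry and thus $GF(q)$-chordal. For the inductive step I would apply the pivotal lemma to the cocircuit $C_i^*$ of $M_i$, writing $M_i=P_{N_i}(M_{i+1},Q_i)$ with $Q_i$ a projective geometry and $N_i$ a projective-geometry flat; since $M_{i+1}$ is $GF(q)$-chordal by induction, so is $M_i$. Taking $i=1$ shows $M$ is $GF(q)$-chordal.

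For the forward implication I would induct on the number of generalized parallel connections used to build $M$ (equivalently on $|E(M)|$). If $M$ is a projective geometry, a perfect elimination ordering is obtained by repeatedly peeling the complement of a hyperplane, since each such complement is a spanning cocircuit whose closure is the whole geometry. Otherwise $M=P_N(M_1,M_2)$ with $M_1,M_2$ both $GF(q)$-chordal and $N$ a projective-geometry flat. The plan is first to peel $E(M_2)\ba N$ from $M$ through a sequence of cocircuits, reducing $M$ to $P_N(M_1,M_2|N)=P_N(M_1,N)=M_1$, and then to append a perfect elimination ordering of $M_1$ supplied by induction. To carry out the first stage I would prove an auxiliary lemma: a $GF(q)$-chordal matroid $M_2$ admits a perfect elimination ordering whose cocircuits all avoid a prescribed modular projective-geometry flat $N$ until $M_2$ is reduced to $M_2|N=N$. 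I would then check that each such cocircuit $D^*$ of the partially reduced $M_2'$ (being disjoint from $N$, so with $N$ contained in its complementary hyperplane) remains a cocircuit of $P_N(M_1,M_2')$ with the same projective-geometry closure, using the modular rank formula for the generalized parallel connection; this is what lets the peeling of $M_2$ be executed inside $M$.

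The main obstacle is the pivotal lemma, namely showing that $M$ is exactly the free amalgam $P_N(M\ba C^*,Q)$ rather than some tighter quotient of it. Two points require care: that $E(Q)=\cl_M(C^*)$ is a modular flat of $M$, and that $N$ is a hyperplane of $Q$ (equivalently that $C^*$ is a cocircuit of $Q$, not merely a spanning union of cocircuits of $Q$), since the rank identity $r(M)=r(M\ba C^*)+r(Q)-r(N)$ forces $r(N)=r(Q)-1$. Both conditions hold automatically on the forward side, where every peeled cocircuit is the complement, in the current geometry, of a hyperplane containing $N$; the difficulty is to show they are forced whenever a perfect elimination ordering exists. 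Here I would exploit the projective-geometry structure of $Q$ together with the cocircuit condition to conclude that $\cl_M(C^*)$ is modular and meets the complementary hyperplane of $M$ in a corank-one flat, after which the flat characterization of the generalized parallel connection---a set is a flat of $P_N(M_1,M_2)$ precisely when its traces on $E(M_1)$ and $E(M_2)$ are flats---identifies $M$ with $P_N(M\ba C^*,Q)$ and closes the argument.
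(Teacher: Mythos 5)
Your high-level architecture matches the paper's: the reverse implication identifies each elimination step as a generalized parallel connection, and the forward implication peels cocircuits off one side of a decomposition of $M$. But there are two genuine gaps, one of which is fatal as written.

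The fatal one is the pivotal lemma. As stated---$C^*$ a cocircuit of $M$ with $Q=M|\cl_M(C^*)$ a projective geometry forces $M=P_N(M\ba C^*,Q)$---it is false, and your proposed derivation of the two key conditions (modularity of $\cl_M(C^*)$ in $M$, and $r(N)=r(Q)-1$) from ``the projective-geometry structure of $Q$ together with the cocircuit condition'' cannot succeed, because those conditions are not forced unless $M$ itself is $GF(q)$-representable. For a counterexample, take five points of $PG(2,3)$: three points $a,b,c$ on a line $L$, and two points $d,e$ off $L$ whose joining line meets $L$ in its fourth point. In this ternary matroid $M$, the set $C^*=\{a,b,c\}$ is a cocircuit (its complement $\{d,e\}$ is a hyperplane), $Q=M|\cl_M(C^*)=U_{2,3}=PG(1,2)$ is a projective geometry, and $N=\cl_M(C^*)\ba C^*=\emptyset$; yet the generalized parallel connection of $M\ba C^*=U_{2,2}$ and $Q$ across $N$ is their direct sum, of rank $4\neq 3$. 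Worse, $\{a,b,c\},\{d\},\{e\}$ is a perfect elimination ordering of cocircuits of $M$, while $M$ is not $GF(q)$-chordal for any $q$ (it is non-binary, since in a simple binary rank-3 matroid any two points off a 3-point line are collinear with a point of that line; and for $q\geq 3$ it has $U_{2,3}$ as a flat, contrary to Theorem~\ref{GF(q)ir}). So both your lemma and the theorem itself require the paper's standing representability convention. That is the missing ingredient: viewing $M$ inside an ambient $PG(r-1,q)$, the hypothesis that $M|\cl_M(C^*)$ is a full $PG(m-1,q)$ means this flat contains every ambient point of its span, and only from that do modularity of $\cl_M(C^*)$ and $r(N)=r(Q)-1$ follow, after which your flat-characterization argument does identify $M$ with $P_N(M\ba C^*,Q)$. (The paper asserts this identification in one line; it holds under its standing assumption, whereas your plan explicitly attempts to force it from weaker hypotheses, and no such argument exists.)

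The second gap is in the forward direction. You decompose $M=P_N(M_1,M_2)$ with $M_1,M_2$ merely $GF(q)$-chordal, so you need the auxiliary lemma that an arbitrary $GF(q)$-chordal $M_2$ has a perfect elimination ordering avoiding a prescribed modular projective flat $N$ until it is reduced to $N$. You do not prove this, and it is essentially a strengthened form of the very implication being proved, requiring an analysis of how an arbitrary projective flat sits relative to the decomposition tree of $M_2$. The paper sidesteps this: it takes the decomposition with $M_2$ itself a projective geometry, chooses a single cocircuit $C_1^*$ of $M_2$ avoiding $E(N)$ (the complement of a hyperplane of $M_2$ containing $N$), notes that $M\ba C_1^*=P_N(M_1,M_2\ba C_1^*)$ is again $GF(q)$-chordal, and finishes by minimality. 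Your verification that a cocircuit of the partially reduced $M_2'$ disjoint from $N$ remains a cocircuit of $P_N(M_1,M_2')$ with the same closure is correct, and if you adopt the refined decomposition your two-stage plan collapses to the paper's argument; as written, though, the auxiliary lemma is a substantial unproved step.
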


\section{Binary chordal matroids}\label{ChordalMatroids}
In this section, we will show that the class of binary chordal matroids  coincides with the class of matroids with no $M(C_4)$ as an induced minor. We then give a constructive characterization of such matroids.
\begin{lemma}\label{circRes}
    Let $n$ be the size of a largest circuit that is an induced minor of a binary matroid $M$. Then $M$ has an $n$-element circuit as an induced restriction.
\end{lemma}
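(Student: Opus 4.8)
The plan is to work in the projective model of binary matroids: regard the simple binary matroid $M$ as a set of nonzero vectors (equivalently, as points of a projective geometry over $GF(2)$) in a space $V$, so that contracting an element $e$ is the projection $\pi$ from the point $v_e$ onto the quotient $V/\langle v_e\rangle$, and $\si(M/e)$ is the image point set. First I would reduce to a single contraction. Since a restriction to a flat of a flat-restriction is again a restriction to a flat, and since contracting $e$ and then restricting to a flat $G$ of $\si(M/e)$ can be rewritten as restricting $M$ to the flat $F\supseteq\{e\}$ corresponding to $G$ and then contracting $e$ (using $(M/e)|(F-e)=(M|F)/e$), every induced minor of $M$ can be written as $\si((M|F)/C)$ for a flat $F$ of $M$ and some $C\subseteq F$. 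An induced restriction of such a minor is, through the flat $F$, again an induced restriction of $M$; so it suffices to prove the following one-step statement and then induct on $|C|$: \emph{if $\si(M/e)$ has an induced restriction isomorphic to a $k$-circuit, then $M$ has an induced restriction isomorphic to an $m$-circuit with $m\ge k$}. Since an induced restriction is an induced minor, the maximality of $n$ forces $m=n$ at the end.

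For the one-step statement, let $F_1$ be the flat of $\si(M/e)$ carrying the $k$-circuit. Pulling $F_1$ back to the flat $F\supseteq\{e\}$ of $M$ to which it corresponds and replacing $M$ by the induced restriction $M|F$, I may assume $\si(M/e)\cong M(C_k)$, with the circuit points $p_1,\dots,p_k$. Every point of $M$ other than $e$ projects to some $p_i$, the fibre $\pi^{-1}(p_i)$ meets $E(M)$ in either a single point $x_i$ or a pair $\{x_i,x_i+v_e\}$, and $e$ is the only point projecting to $0$. Choosing a lift $x_i$ of each $p_i$, the relation $\sum_i p_i=0$ in the quotient gives $\sum_i x_i\in\{0,v_e\}$, and the minimality of the circuit $\{p_i\}$ lifts, by linearity of $\pi$, to show that $\{x_1,\dots,x_k\}$ is a circuit of $M$ when the sum is $0$ and $\{x_1,\dots,x_k,e\}$ is a circuit of $M$ when the sum is $v_e$.

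The main obstacle is that such a lifted circuit need not be a flat, and this is exactly where a doubled fibre $\{x_j,x_j+v_e\}$ can interfere, since the companion point $x_j+v_e$ may also lie in the span of the lift. I would resolve this by choosing the lifts cleverly. If some fibre is doubled, then flipping one such lift toggles $\sum_i x_i$ by $v_e$, so I can arrange $\sum_i x_i=0$; a short dimension count (the image of $\langle x_1,\dots,x_{k-1}\rangle$ under $\pi$ already has full rank $k-1$, so $v_e$ is not in the span) then shows that $\cl_M(\{x_1,\dots,x_k\})$ contains neither $e$ nor any companion point $x_j+v_e$, so $\{x_1,\dots,x_k\}$ is a flat and gives a $k$-circuit as an induced restriction. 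If no fibre is doubled, then $E(M)=\{e,x_1,\dots,x_k\}$; when $\sum_i x_i=0$ the set $\{x_i\}$ is again a closed $k$-circuit, and when $\sum_i x_i=v_e$ the whole ground set $\{e,x_1,\dots,x_k\}$ is itself the circuit $M(C_{k+1})$, hence trivially a flat.

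In every case $M$ has an induced restriction that is a circuit of size $k$ or $k+1$, which completes the one-step statement. Feeding this into the induction on $|C|$ (pick $e\in C$, apply the inductive hypothesis to $\si(M/e)$ and $C-e$, then the one-step statement to $M$ and $e$) produces an induced restriction of $M$ that is a circuit of size at least $n$; by the maximality of $n$ its size is exactly $n$, proving the lemma.
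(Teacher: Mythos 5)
Your proof is correct and is essentially the paper's own argument in explicit vector-space language: the paper likewise reduces to a single contraction by induction, views $M$ inside $P_r$, and runs the same dichotomy, with your companion points $x_i+v_e$ of doubled fibres appearing there as the third points $f_i$ on the projective lines $\cl_{P_r}(\{e,e_i\})$, located via modularity of $P_r$ rather than your dimension count. The only organizational difference is that the paper argues by contradiction (assuming no $n$-circuit induced restriction, so that all $f_i$ avoid $E(M)$ and $C\cup e$ becomes an $(n+1)$-circuit flat, contradicting the maximality of $n$), whereas you prove the direct ``$k$ or $k+1$'' one-step statement and invoke maximality once at the end.
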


\begin{proof} 
    We may assume that, for some independent set $I$ of $M$, the matroid $\si(M/I)$ has an $n$-element circuit as a flat. If $|I|=0$, then the result holds. Assume the result holds for $|I|<k$, and let $|I|=k\geq 1$. Take $e\in I$. Then $\si((M/e)/(I-e))$ has an $n$-element circuit as a flat. Certainly a largest circuit that occurs as an induced minor of $\si(M/e)$ has $n$ elements. Thus, by the induction assumption, $\si(M/e)$ has, as an induced restriction, an $n$-element circuit $C$ where $C=\{e_1,e_2,\dots,e_n\}$. Then $C$ is a circuit of $M/e\backslash Y$ for some set $Y$. Thus $C$ or $C\cup e$ is a circuit of $M$. We may assume that no $n$-element circuit is an induced restriction of $M$. Now view $M$ as a restriction of the binary projective geometry $P_r$ where $r=r(M)$. For each $i$ in $[n]$, let $f_i$ be the third point on the projective line $\cl_{P_r}(\{e,e_i\})$. For each $i$ in $[n]$, the set $\{e_1,e_2,\dots,e_n\}-\{e_i\}$ is an independent set $I_i$ of $M/e$ and hence of $M$. Now $\cl_M(I_i)$ does not contain $e$ otherwise $I_i$ contains a circuit of $M/e$. Moreover, $\cl_M(I_i)$ does not contain $e_i$ as $M$ does not have an $n$-element circuit as an induced restriction. The projective flat $\cl_{P_r}(I_i)$ must meet $\{e,f_i,e_i\}$ in $P_r$, so this intersection is $f_i$. If $f_i$ is in $E(M)$, then $M$ has $I_i\cup f_i$ as an induced restriction that is an $n$-element circuit. Thus $\{f_1,f_2,\dots,f_n\}$ avoids $E(M)$. We deduce that $C\cup e$ is an $(n+1)$-element circuit of $M$ that is an induced minor of $M$, a contradiction.    
\end{proof}
In the next theorem, the equivalence of (i) and (iii) is an immediate consequence of the definition. In \cite[Lemma 3.8]{douthox}, we had asserted the equivalence of (i) and (ii). Our proof of this relies on Lemma~\ref{circRes} and is more subtle than we originally thought, so we have included it.
\begin{theorem}\label{noC4}
    The following are equivalent for a binary matroid $M$.
    \begin{enumerate}[label=\normalfont(\roman*)]
        \item\label{i} $M$ is chordal.
        \item\label{ii} $M$ has no $M(C_4)$ as an induced minor.
        \item $M$ has no member of $\{M(C_n):n\geq 4\}$ as an induced restriction.
    \end{enumerate}
\end{theorem}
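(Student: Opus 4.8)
The plan is to establish the cycle of implications (i)$\Rightarrow$(iii)$\Rightarrow$(ii)$\Rightarrow$(i), since (i)$\Leftrightarrow$(iii) is immediate from the definition of chordal (a binary matroid is chordal iff no $M(C_n)$ with $n\geq 4$ is a flat, and a flat of a binary matroid restriction is the same as an induced restriction). The genuinely new content is the equivalence with the induced-minor condition (ii), so the real work is showing (ii)$\Rightarrow$(iii), or contrapositively, that if $M$ has some $M(C_n)$ with $n\geq 4$ as an induced restriction then it has $M(C_4)$ as an induced minor.

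**First I would** dispose of the easy directions. For (i)$\Rightarrow$(iii): if $M$ is chordal and had $M(C_n)$ ($n\geq 4$) as an induced restriction, that induced restriction is a restriction to a flat, so $M(C_n)$ would be a flat of $M$, contradicting chordality. For (iii)$\Rightarrow$(ii): this is where Lemma~\ref{circRes} does the heavy lifting. Suppose $M$ has $M(C_4)$ as an induced minor. Then $M$ has a circuit of size at least $4$ as an induced minor, so let $n$ be the size of a largest circuit occurring as an induced minor; clearly $n\geq 4$. By Lemma~\ref{circRes}, $M$ has an $n$-element circuit $M(C_n)$ as an induced restriction, and since $n\geq 4$, this violates (iii). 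Thus (iii)$\Rightarrow$(ii) follows by contraposition.

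**The remaining implication** (ii)$\Rightarrow$(i) I would again prove contrapositively: assuming $M$ is not chordal, I produce an $M(C_4)$ induced minor. If $M$ is not chordal, some $M(C_n)$ with $n\geq 4$ is a flat, hence an induced restriction, hence an induced minor. So $M$ has a circuit of size at least $4$ as an induced minor, and I may again let $n\geq 4$ be the maximum such size. The point is then to show that any $M(C_n)$ with $n\geq 4$ has $M(C_4)$ as an induced minor: contracting a single element $e_1$ of the cycle $C_n$ and simplifying yields $M(C_{n-1})$ as an induced minor (contraction in a graphic matroid corresponds to edge contraction, which shortens the cycle, and the simplification removes the resulting parallel pair), so iterating brings $n$ down to $4$. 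I would verify carefully that each such contraction-and-simplify step stays within the induced-minor operations (restriction to a flat followed by contraction), so that $M(C_4)$ is indeed an induced minor of $M$, contradicting (ii).

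**The main obstacle** is the bookkeeping in the (iii)$\Rightarrow$(ii) direction, since its content is entirely contained in Lemma~\ref{circRes}; the subtlety flagged by the authors lives there rather than in this theorem. Once that lemma is invoked, the theorem is a clean cycle of implications. I would therefore make sure the statement ``$n$ is the size of a largest circuit that is an induced minor'' is well-defined (it is, since $M(C_4)$ being an induced minor guarantees the set of such circuit sizes is nonempty and bounded above by $|E(M)|$) before applying the lemma, so that the reduction to an $n$-element circuit induced restriction is legitimate.
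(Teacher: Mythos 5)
Your proof is correct and follows essentially the same route as the paper: the only substantive direction is handled, as in the paper, by invoking Lemma~\ref{circRes} on a largest circuit occurring as an induced minor, with the remaining implications being the routine observations that flats give induced restrictions and that contracting elements of $M(C_n)$ yields $M(C_4)$ as an induced minor. (One harmless slip: contracting an edge of a cycle of length at least $4$ creates no parallel pair, so no simplification is actually needed in that step.)
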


\begin{proof}
    Clearly (iii) implies (ii). Now suppose that $M$ has $M(C_4)$ as an induced minor. Let $n$ be the size of a largest circuit that is an induced minor of $M$. Since $M$ has $M(C_4)$ as an induced minor, $n\geq 4$. Then, by Lemma~\ref{circRes}, $M$ has an $n$-element circuit as an induced restriction, that is, $M$ has $M(C_n)$ as an induced restriction for some $n\geq 4$.
\end{proof}

We now give a constructive characterization of binary chordal matroids. In a matroid $M$, denote a vertical $k$-separation $(X,Y)$ by $(X,G,Y)$ where $G=\cl_M(X)\cap \cl_M(Y)$. The vertical $k$-separation $(X,Y)$ is \textit{exact} if $r(X)+r(Y)-r(M)=k-1$. For a rank-$r$ binary matroid $M$ that is viewed as a restriction of $P_r$, if $X\subseteq E(P_r)-E(M)$, we denote by $M+X$ the matroid $P_r|(E(M)\cup X)$.

\begin{lemma}\label{emptyGuts}
    For some $k\geq 2$, let $(X,G,Y)$ be an exact vertical $k$-separation of a binary matroid $M$ where $G=\emptyset$. Then $M$ has $M(C_4)$ as an induced minor.
\end{lemma}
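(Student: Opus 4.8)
The plan is to exhibit a circuit $D$ of size at least four that is also a flat of $M$; since $M|D\cong M(C_{|D|})$ for such a $D$, Theorem~\ref{noC4} then immediately yields that $M$ has $M(C_4)$ as an induced minor. First I would unpack the two hypotheses. Because the separation is exact with $k\geq 2$, we have $r(X)+r(Y)-r(M)=k-1\geq 1$, so the two sides are genuinely linked: if no circuit of $M$ met both $X$ and $Y$, then $M=(M|X)\oplus(M|Y)$ and $r(X)+r(Y)=r(M)$, contradicting $k-1\geq 1$. Hence some circuit crosses $(X,Y)$. Moreover, $G=\emptyset$ forces both $X$ and $Y$ to be flats: since $X\subseteq \cl_M(X)$, $Y\subseteq \cl_M(Y)$, $X\cup Y=E(M)$, and $\cl_M(X)\cap \cl_M(Y)=\emptyset$, we deduce $\cl_M(X)\subseteq X$ and $\cl_M(Y)\subseteq Y$.

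Among all circuits that meet both $X$ and $Y$, I would let $D$ be one of minimum size, and first check $|D|\geq 4$. As $M$ is simple, $|D|\geq 3$. If $|D|=3$, then two of its elements lie on one side, say $y_1,y_2\in Y$ and $x\in X$; but then $x\in \cl_M(\{y_1,y_2\})\subseteq \cl_M(Y)=Y$, contradicting $X\cap Y=\emptyset$. So $|D|\geq 4$.

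The crux is to show that this minimum crossing circuit $D$ is a flat. Suppose not; then there is an element $e\in \cl_M(D)\setminus D$. Since $M$ is binary, $e\in \cl_M(D)$ produces a circuit $C_1=\{e\}\cup D_1$ with $D_1\subsetneq D$, and $C_2:=C_1\,\triangle\, D=\{e\}\cup(D\setminus D_1)$ is then also a circuit, with $C_1\cap C_2=\{e\}$ and $C_1\,\triangle\, C_2=D$. Simplicity forces $|C_1|,|C_2|\geq 3$, hence $|D_1|,|D\setminus D_1|\geq 2$, so both $C_1$ and $C_2$ are strictly shorter than $D$. Now $e$ lies on exactly one side, say $e\in X$; since $D\cap Y\neq\emptyset$ is partitioned between $D_1$ and $D\setminus D_1$, at least one of $C_1,C_2$ meets $Y$ while containing $e\in X$, and therefore crosses $(X,Y)$. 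This contradicts the minimality of $D$, so $D$ must be a flat.

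Consequently $M|D=M(C_{|D|})$ with $|D|\geq 4$ is an induced restriction of $M$, and Theorem~\ref{noC4} gives that $M$ has $M(C_4)$ as an induced minor. I expect the main obstacle to be the flat claim in the third paragraph: the essential idea is to take $D$ of minimum size among crossing circuits, so that any chord $e\in\cl_M(D)\setminus D$ would, via binary circuit elimination, split $D$ into two strictly shorter circuits, one of which still crosses the separation—contradicting minimality. The disjointness of the flats $X$ and $Y$ (coming from $G=\emptyset$) is exactly what rules out short crossing circuits and keeps $|D|\geq 4$.
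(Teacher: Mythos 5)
Your proof is correct, and it takes a genuinely different, more elementary route than the paper's. Both arguments share the same endgame---exhibit a circuit of size at least four that is a flat, hence an induced $M(C_n)$ with $n\geq 4$, and invoke Theorem~\ref{noC4}---but you find such a circuit in $M$ itself, whereas the paper constructs one inside an induced minor of $M$. The paper's proof views $M$ as a restriction of $P_r$, uses exactness to pick a point $z\in\cl_{P_r}(X)\cap\cl_{P_r}(Y)$ (a set that is nonempty in $P_r$ even though $G=\emptyset$ in $M$), chooses a minimal circuit $C_0\cup\{z\}$ with $C_0\subseteq X$, decomposes $M|\cl_M(Y\cup C_0)$ as a 2-sum at $z$, contracts $C_0$ down to two elements $a,b$, and then chooses a minimal circuit $D_0\cup\{z\}$ on the $Y$ side so that $D_0\cup\{a,b\}$ is a circuit that is a flat of the resulting induced minor $N$. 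Your single minimization---a minimum-size circuit $D$ meeting both $X$ and $Y$---replaces both of the paper's minimizations and all of that machinery. The key steps check out: the chord-splitting argument is sound, since in a binary matroid any circuit in the decomposition of $C_1\,\triangle\, D$ avoiding $e$ would be a dependent set properly contained in the circuit $D$, so $C_2$ is indeed a single circuit strictly shorter than $D$; and your observation that $G=\emptyset$ makes $X$ and $Y$ flats is exactly what rules out crossing triangles, giving $|D|\geq 4$. What your approach buys is economy and transparency: no projective embedding, no 2-sum, no contraction, and each hypothesis enters visibly (exactness yields a crossing circuit, $G=\emptyset$ yields $|D|\geq 4$, binarity yields the splitting). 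What the paper's route produces in addition---an explicit 2-sum structure, with the $X$-side of the crossing circuit contracted to two elements---is not needed for the conclusion, so your shorter argument loses nothing.
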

\begin{proof}
    Let $r(M)=r$. Since $X$ spans $\cl_{P_r}(X)$, we may choose $C_0$ to be a smallest set contained in $X$ such that, for some $z$ in $\cl_{P_r}(X)\cap \cl_{P_r}(Y)$, the set $C_0\cup \{z\}$ is a circuit of $M|\cl_{P_r}(X)$. Since $(X,G,Y)$ is an exact vertical $k$-separation, such a point $z$ must exist. Let $M'=M|\cl_M(Y\cup C_0)$. Since $M'$ is simple and binary, it follows by the choice of $C_0$ that $\cl_{P_r}(C_0)\cap (\cl_{P_r}(X)\cap \cl_{P_r}(Y))=\{z\}$. We conclude that $M'$ decomposes as the 2-sum of $(M|\cl_M(Y))+\{z\}$ and $(M|\cl_M(C_0))+\{z\}$ at the basepoint $z$. 
    Let $a$ and $b$ be distinct elements of $C_0$ and let $N=M'/(C_0-\{a,b\})$. Then $N$ decomposes as the 2-sum of $(M|\cl_M(Y))+\{z\}$ and the triangle $\{a,b,z\}$ at the basepoint $z$. Let $D_0$ be a smallest set contained in $Y$ such that $D_0\cup \{z\}$ is a circuit of $(M|\cl_M(Y))+\{z\}$. Then the set $D_0\cup \{a,b\}$ is a circuit of $N$, and, by the choice of $D_0$, the set $D_0\cup \{a,b\}$ is a flat of $N$.
    Therefore, $N$ has the circuit $D_0\cup \{a,b\}$ as an induced restriction. By Theorem~\ref{noC4}, $N$ has $M(C_4)$ as an induced minor. We conclude that $M$ has $M(C_4)$ as an induced minor.
\end{proof}
\begin{lemma}\label{fullGuts}
    For some $k\geq 2$, suppose $(X,G,Y)$ is an exact vertical $k$-separation of a binary chordal matroid $M$. Then $r_M(G)=k-1$.
\end{lemma}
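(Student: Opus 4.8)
The plan is to reduce to the empty-guts case treated in Lemma~\ref{emptyGuts} by contracting the guts. First I would record the easy inequality $r_M(G)\le k-1$, which holds for every matroid: since $X\cup Y=E(M)$ we have $\cl_M(X)\cup \cl_M(Y)=E(M)$, so submodularity of the rank function gives $r_M(X)+r_M(Y)=r(\cl_M(X))+r(\cl_M(Y))\ge r(M)+r_M(G)$, whence $r_M(G)\le r_M(X)+r_M(Y)-r(M)=k-1$. The whole content of the lemma is therefore the reverse inequality, and I would prove it by contradiction: assume $g:=r_M(G)\le k-2$ and produce $M(C_4)$ as an induced minor of $M$, contradicting chordality via Theorem~\ref{noC4}.

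For the construction, let $B_G$ be a basis of the flat $G$ and set $N=\si(M/B_G)$, which is an induced minor of $M$. Writing $X_1=X-G$ and $Y_1=Y-G$, the elements of $G-B_G$ become loops of $M/B_G$ and are discarded in $N$; and (as justified below) no element of $X_1$ becomes parallel to one of $Y_1$, so $(X_1,Y_1)$ induces a partition $(X^*,Y^*)$ of $E(N)$. Using $\cl_M(X_1\cup B_G)=\cl_M(X)$ (as $X\cap G\subseteq \cl_M(B_G)$) together with the contraction rank formula, I would compute $r_N(X^*)=r_M(X)-g$, $r_N(Y^*)=r_M(Y)-g$, and $r(N)=r(M)-g$. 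Hence the connectivity of $(X^*,Y^*)$ equals $(k-1)-g$, so this is an exact vertical $(k-g)$-separation; the hypothesis $g\le k-2$ is exactly what guarantees the order $k-g$ is at least $2$, and $r_M(X),r_M(Y)\ge k$ gives $r_N(X^*),r_N(Y^*)\ge k-g$, so the separation is genuinely vertical.

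The step I expect to be the crux is verifying that this separation has empty guts, which rests on two checks. First, no element of $X_1$ becomes parallel to an element of $Y_1$ in $M/B_G$: if $y\in Y_1$ lay in $\cl_M(\{x\}\cup B_G)\subseteq \cl_M(X)$ for some $x\in X_1$, then $y\in \cl_M(X)\cap \cl_M(Y)=G$, contradicting $y\notin G$; this is what lets $(X_1,Y_1)$ pass cleanly to a partition of $E(N)$. Second, for $z\in E(N)$ one has $z\in \cl_N(X^*)$ iff $z\in \cl_M(X)$, and $z\in \cl_N(Y^*)$ iff $z\in \cl_M(Y)$, so the new guts $\cl_N(X^*)\cap \cl_N(Y^*)$ corresponds to $(\cl_M(X)\cap \cl_M(Y))-G=G-G=\emptyset$. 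With an empty-guts exact vertical $j$-separation in hand for $j=k-g\ge 2$, Lemma~\ref{emptyGuts} yields $M(C_4)$ as an induced minor of $N$, hence of $M$ by transitivity of induced minors, contradicting that $M$ is chordal. Therefore $g=k-1$, as required.
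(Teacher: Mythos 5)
Your proof is correct and takes essentially the same route as the paper: contract the guts to obtain an exact vertical $(k-r_M(G))$-separation with empty guts, then invoke Lemma~\ref{emptyGuts} and Theorem~\ref{noC4} to get a contradiction. The paper compresses this into three lines (contracting $G$ rather than a basis of it, which is the same operation under the paper's simplification convention), leaving implicit the bound $r_M(G)\le k-1$, the rank computations, and the verification that the contracted separation is vertical with empty guts, all of which you check explicitly.
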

\begin{proof}
    Suppose $r(G)<k-1$. Then, for $k'=k-r(G)$, we see that $k'\geq 2$ and $M/G$ has an exact vertical $k'$-separation $(X',G',Y')$ with $G'=\emptyset$. By Lemma~\ref{emptyGuts}, $M$ has $M(C_4)$ as an induced minor, a contradiction to Theorem~\ref{noC4}. Therefore, $r(G)=k-1$. 
\end{proof}

In the next two proofs, we allow the matroids to be non-simple and we do not simplify after contracting. The next result seems unlikely to be new, but we include a proof for completeness.

\begin{lemma}\label{modG}
    If $G$ is a flat of a matroid $N$ and $G-g$ is a modular flat of $N/g$ for some $g$ in $G$, then $G$ is modular flat of $N$.
\end{lemma}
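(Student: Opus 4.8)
The plan is to verify the modular pair condition for $G$ against every flat of $N$, organizing the argument according to whether the flat contains $g$. Since the rank function of $N$ is submodular, for each flat $F$ it suffices to establish the inequality $r(G)+r(F)\le r(\cl_N(G\cup F))+r(G\cap F)$, the reverse inequality being automatic. Thus the whole proof reduces to producing, for each flat $F$, the ``$\le$'' half of the modular pair equation.

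First I would record the translation of the hypothesis into the language of $N$. We may assume $g$ is not a loop, since contracting a loop changes neither ranks nor the lattice of flats and the conclusion is then immediate. For any $S\subseteq E(N)-g$ we have $r_{N/g}(S)=r_N(S\cup g)-1$, and the flats of $N/g$ are exactly the sets $H-g$ with $H$ a flat of $N$ containing $g$. Substituting these rank identities into the modular pair equation for $G-g$ and an arbitrary flat $H-g$ of $N/g$, the four $-1$ terms cancel in pairs, showing that $(G-g,H-g)$ is a modular pair in $N/g$ if and only if $(G,H)$ is a modular pair in $N$. Hence the hypothesis is equivalent to the assertion that $G$ forms a modular pair with \emph{every} flat of $N$ that contains $g$, which disposes of all flats $F$ with $g\in F$.

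It remains to treat a flat $F$ with $g\notin F$. Here I would pass to $F^{+}=\cl_N(F\cup g)$, which is a flat containing $g$, so $(G,F^{+})$ is a modular pair by the previous step. Because $g\notin\cl_N(F)=F$ we have $r(F^{+})=r(F)+1$, and because $g\in G$ we have $\cl_N(G\cup F^{+})=\cl_N(G\cup F)$. Feeding these two facts into the modular pair equation for $(G,F^{+})$ reduces the desired equality for $(G,F)$ to the single rank identity $r(G\cap\cl_N(F\cup g))=r(G\cap F)+1$. One inequality is free: since $G\cap F\subseteq F$ we have $g\notin\cl_N(G\cap F)$, and both $G$ and $\cl_N(F\cup g)$ are flats containing $(G\cap F)\cup g$, so $\cl_N((G\cap F)\cup g)\subseteq G\cap\cl_N(F\cup g)$, giving $r(G\cap\cl_N(F\cup g))\ge r(G\cap F)+1$ at once.

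The main obstacle is the reverse containment $G\cap\cl_N(F\cup g)\subseteq\cl_N((G\cap F)\cup g)$. To attack it I would take $x\in G\cap\cl_N(F\cup g)$; if $x\in F$ then $x\in G\cap F$ and there is nothing to prove, so assume $x\notin F$, whence the exchange property gives $g\in\cl_N(F\cup x)$. The goal becomes $x\in\cl_N((G\cap F)\cup g)$, and this is exactly the point at which the modularity transported from $N/g$ must be brought to bear in full strength: I would re-apply the modular pair property of $G-g$ against the flat $\cl_{N/g}(F)=F^{+}-g$ of $N/g$, using the element $x$ to pin the excess rank of $G\cap\cl_N(F\cup g)$ down to the single dimension contributed by $g$. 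I expect this containment, rather than either of the bookkeeping reductions, to be the genuinely delicate step, since it is precisely here that one must preclude $\cl_N(F\cup g)$ from meeting $G$ in more than the one extra dimension supplied by $g$.
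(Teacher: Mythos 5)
Your bookkeeping is sound as far as it goes: the translation showing that the hypothesis is equivalent to ``$(G,H)$ is a modular pair for every flat $H$ of $N$ containing $g$'' is correct (it is the paper's first case), and passing to $F^{+}=\cl_N(F\cup g)$ correctly reduces the remaining case to the single identity $r(G\cap F^{+})=r(G\cap F)+1$. The genuine gap is that you never prove the containment $G\cap F^{+}\subseteq \cl_N((G\cap F)\cup g)$; you only name it as the delicate step and gesture at a method. Moreover, the method you propose is circular: $\cl_{N/g}(F)$ is exactly $F^{+}-g$, and, after your own rank translation, the modular-pair equation for $(G-g,F^{+}-g)$ in $N/g$ is literally the equation for $(G,F^{+})$ in $N$ that you have already used. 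Re-applying it only gives $r(G\cap F^{+})=r(G)+r(F)+1-r(G\cup F)$, and bounding this above by $r(G\cap F)+1$ is equivalent to the submodular inequality for $(G,F)$ holding with equality---which is precisely what is to be proved.

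No argument can close this gap, because the missing claim, and indeed Lemma~\ref{modG} as stated, is false. Let $N=U_{3,4}$ on $\{a,b,c,d\}$, let $G=\{a,b\}$, $g=a$, and $F=\{c,d\}$. Then $G$ is a flat of $N$, and $G-g=\{b\}$ is a rank-one flat of $N/g\cong U_{2,3}$, hence a modular flat (rank-one flats are always modular). But $r(G)+r(F)=4>3=r(G\cap F)+r(G\cup F)$, so $G$ is not modular in $N$; equivalently, $G\cap F^{+}=G$ has rank $2$ rather than $r(G\cap F)+1=1$, so the containment you need fails for $x=b$. A binary example is $N=M(K_4)$ with $G$ and $F$ two distinct perfect matchings of $K_4$ and $g\in G$. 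In fact, since rank-one flats are always modular, the lemma as stated would imply, by induction on $r(G)$, that every flat of every matroid is modular, which is absurd. It is worth noting that the paper's own proof breaks at exactly the step you flagged: in its case $g\notin H$ it applies equation~(\ref{eq1}) to the set $H$, but $H$ need not be a flat of $N/g$ (in the example above, $\{c,d\}$ is not a flat of $N/a$, and (\ref{eq1}) fails for it). So your instinct about where the difficulty lies was exactly right; the correct conclusion is not that a cleverer use of the hypothesis will finish the argument, but that the lemma needs additional hypotheses---it is invoked in Lemma~\ref{modGuts} only for the guts of a vertical separation of a binary chordal matroid, and some such extra structure is genuinely required.
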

\begin{proof}
    The result is immediate if $g$ is a loop of $N$, so assume $r(\{g\})=1$. For all flats $F$ of $N/g$, 
    \begin{equation}\label{eq1}
        r_{N/g}(F)+r_{N/g}(G-g)=r_{N/g}(F\cap (G-g))+r_{N/g}(F\cup (G-g)).
    \end{equation}
    We shall show that, if $H$ is a flat of $N$, then 
    \begin{equation}\label{eq2}    r_N(H)+r_N(G)-r_N(H\cap G)-r_N(H\cup G)=0.
    \end{equation}

    Suppose $g\in H$. Then $r_{N/g}(H-g)=r_N(H)-1$ and $r_{N/g}((H\cup G)-g)=r_{N}(H\cup G)-1$. Therefore, since (\ref{eq1}) holds, so does (\ref{eq2}).

    Now suppose $g\not\in H$. Then $r_{N/g}(H)=r_N(H\cup g)-1$ and $r_{N/g}(H\cap G)=r_N((H\cap G)\cup g)-1$. Since $H$ and $G$ are flats of $N$, it follows that $H\cap G$ is a flat of $N$. Therefore, $r_N(H\cup g)=r_N(H)+1$ and $r_N((H\cap G)\cup g)=r_N(H\cap G)+1$. Thus (\ref{eq2}) holds and the lemma is proved.
\end{proof}

\begin{lemma}\label{modGuts}
    For some $k\geq 2$, let $(X,G,Y)$ be an exact vertical $k$-separation of a binary chordal matroid $M$. Then $G$ is a modular flat of $M|\cl(X)$ or of $M|\cl(Y)$.
\end{lemma}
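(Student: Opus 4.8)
The plan is to induct on $k$, using Lemma~\ref{modG} to transfer modularity across a single contraction. By Lemma~\ref{fullGuts} we have $r_M(G)=k-1$, so when $k=2$ the flat $G$ has rank one. Since a rank-one flat of a matroid is always modular, $G$ is then modular in both $M|\cl_M(X)$ and $M|\cl_M(Y)$, which settles the base case.

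For the inductive step I would take $k\geq 3$ and choose a non-loop $g\in G$, which exists because $r(G)=k-1\geq 2$. Working in the (non-simplified) contraction $M/g$, the first task is to check that the separation descends. Relabelling if necessary, assume $g\in X$. Using $\cl_{M/g}(Z-g)=\cl_M(Z)-g$ whenever $g\in\cl_M(Z)$, I get $\cl_{M/g}(X-g)=\cl_M(X)-g$ and $\cl_{M/g}(Y)=\cl_M(Y)-g$, so the guts of $(X-g,Y)$ in $M/g$ is $(\cl_M(X)-g)\cap(\cl_M(Y)-g)=G-g$, a flat of $M/g$ of rank $k-2$. A routine rank count then shows $(X-g,\,G-g,\,Y)$ is an exact vertical $(k-1)$-separation of $M/g$: each side loses exactly one in rank, so each still has rank at least $k-1$, and $r(X)+r(Y)-r(M)$ drops from $k-1$ to $k-2$.

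Since every induced minor of $M/g$ is an induced minor of $M$, Theorem~\ref{noC4} guarantees that $M/g$ is again chordal, so the inductive hypothesis applies: $G-g$ is a modular flat of $(M/g)|(\cl_M(X)-g)$ or of $(M/g)|(\cl_M(Y)-g)$. In the first case, the minor identity $(M|\cl_M(X))/g=(M/g)|(\cl_M(X)-g)$ shows that $G-g$ is a modular flat of $(M|\cl_M(X))/g$; as $G$ is a flat of $M|\cl_M(X)$ (being an intersection of flats of $M$ lying in $\cl_M(X)$), Lemma~\ref{modG} lets me conclude that $G$ is a modular flat of $M|\cl_M(X)$. The second case gives modularity in $M|\cl_M(Y)$ symmetrically, completing the induction.

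I expect the delicate part to be the bookkeeping around the contraction: confirming that $M/g$ carries an exact vertical $(k-1)$-separation with guts precisely $G-g$ (keeping careful track of closures, loops, and the exactness count, none of which is automatic once we stop simplifying), and correctly matching the contracted side $(M/g)|(\cl_M(X)-g)$ with $(M|\cl_M(X))/g$ so that Lemma~\ref{modG} can be invoked on the correct half.
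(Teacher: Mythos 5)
Your proof is correct and follows essentially the same route as the paper's: induction on $k$, with the base case $k=2$ handled by Lemma~\ref{fullGuts} and the modularity of rank-one flats, and the inductive step done by contracting a point $g$ of $G$, applying the induction hypothesis to the resulting exact vertical $(k-1)$-separation of $M/g$, and lifting modularity back via Lemma~\ref{modG}. The only difference is that you spell out the bookkeeping (that the separation descends to $M/g$ with guts $G-g$, that $M/g$ remains chordal, and the identity $(M|\cl_M(X))/g=(M/g)|(\cl_M(X)-g)$) which the paper leaves implicit.
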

\begin{proof}
    If $k=2$, then, by Lemma~\ref{fullGuts}, $r(G)=1$. Moreover, $M$ is a parallel connection of $M|\cl(X)$ and $M|\cl(Y)$. By Theorem~\ref{noC4}, both of these matroids are chordal. As $G$ is a single point, it is a modular flat in both $M|\cl(X)$ and $M|\cl(Y)$ and the result holds. Suppose the result holds for all $k<n$ and let $(X,G,Y)$ be an exact vertical $n$-separation of $M$. Then, for any $g$ in $G$, the matroid $M/g$ has an exact vertical $(n-1)$-separation $(X-g,G-g,Y-g)$. By the induction assumption, $G-g$ is a modular flat in either $(M/g)|\cl(X-g)$ or $(M/g)|\cl(Y-g)$. Therefore, by Lemma~\ref{modG}, it follows that $G$ is a modular flat of $M|\cl(X)$ or of $M|\cl(Y)$. 
\end{proof}

\begin{theorem}
    All binary chordal matroids can be obtained by starting with round binary chordal matroids and repeatedly taking generalized parallel connections of two previously constructed matroids across a set that is a modular flat of one of them. 
\end{theorem}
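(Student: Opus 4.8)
The plan is to argue by strong induction on $|E(M)|$, peeling off a single generalized parallel connection at each step using the vertical-separation machinery of Lemmas~\ref{fullGuts}, \ref{modG}, and \ref{modGuts}. If $M$ is round, then $M$ is itself a round binary chordal matroid and there is nothing to prove; this is the base of the induction. So suppose $M$ is not round. Then $M$ has an exact vertical $k$-separation $(X,G,Y)$ for some $k\geq 2$, and I set $M_X=M|\cl(X)$ and $M_Y=M|\cl(Y)$.

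First I would record the elementary facts that make the two pieces legitimate inputs for the induction. Since $\cl(X)$ and $\cl(Y)$ are flats of $M$, and every flat of a restriction to a flat is again a flat of $M$, both $M_X$ and $M_Y$ are chordal by Theorem~\ref{noC4}. Because $(X,Y)$ is an exact vertical $k$-separation with $r(X),r(Y)\geq k$, neither side spans $M$ (otherwise the other side would have rank $k-1<k$); hence $\cl(X)$ and $\cl(Y)$ are proper subsets of $E(M)$ and both pieces are strictly smaller than $M$. Moreover $E(M_X)\cap E(M_Y)=\cl(X)\cap\cl(Y)=G$, so $M_X$ and $M_Y$ agree on the common flat $G$, with $M_X|G=M|G=M_Y|G$. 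Finally, exactness gives $r(X)+r(Y)-r(M)=k-1$ and Lemma~\ref{fullGuts} gives $r(G)=k-1$, so
\begin{equation*}
r(M_X)+r(M_Y)-r(G)=r(X)+r(Y)-(k-1)=r(M).
\end{equation*}

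By Lemma~\ref{modGuts}, $G$ is a modular flat of $M_X$ or of $M_Y$; without loss of generality it is a modular flat of $M_X$. The heart of the argument, and the step I expect to be the main obstacle, is to identify $M$ with the generalized parallel connection $P_G(M_X,M_Y)$. Here I would appeal to the characterization of the generalized parallel connection: once $G$ is a modular flat of $M_X$ and $M_X|G=M_Y|G$, the matroid $P_G(M_X,M_Y)$ is defined, and it is the unique matroid on $\cl(X)\cup\cl(Y)=E(M)$ whose restrictions to $\cl(X)$ and to $\cl(Y)$ are $M_X$ and $M_Y$, whose total rank is $r(M_X)+r(M_Y)-r(G)$, and whose flats are exactly the sets $F$ with $F\cap\cl(X)$ a flat of $M_X$ and $F\cap\cl(Y)$ a flat of $M_Y$. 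All of these conditions have just been verified for $M$ itself, so $M=P_G(M_X,M_Y)$. The delicate point is to confirm that $M$ is not a proper weak-image of $P_G(M_X,M_Y)$ (the generalized parallel connection being the freest amalgam); this is precisely where the additive rank identity above together with the modularity of $G$ must be invoked, most cleanly through the flat description just stated.

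With $M=P_G(M_X,M_Y)$ in hand, the induction closes immediately. The matroids $M_X$ and $M_Y$ are binary chordal matroids with fewer elements than $M$, so by the inductive hypothesis each can be built from round binary chordal matroids by a sequence of generalized parallel connections across modular flats. Carrying out all of these constructions and then forming the single further generalized parallel connection of $M_X$ and $M_Y$ across $G$, a modular flat of $M_X$, exhibits $M$ in the required form and completes the induction.
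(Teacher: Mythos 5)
Your proof follows the paper's argument exactly: round matroids as the base case, an exact vertical $k$-separation otherwise, Lemma~\ref{modGuts} to obtain modularity of the guts, identification of $M$ with a generalized parallel connection of its two pieces, and recursion on the pieces. The extra bookkeeping you supply is correct and worthwhile: chordality of $M|\cl(X)$ and $M|\cl(Y)$ via Theorem~\ref{noC4}(iii), the fact that neither $\cl(X)$ nor $\cl(Y)$ is all of $E(M)$ (so the induction genuinely descends), and the rank identity $r(M_X)+r(M_Y)-r(G)=r(M)$ via exactness and Lemma~\ref{fullGuts}.

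However, the step you yourself call the heart of the argument is not actually established in your write-up. You characterize $P_G(M_X,M_Y)$ as ``the unique matroid'' satisfying three conditions --- the restriction conditions, the rank condition, \emph{and} the condition that its flats are exactly the sets $F$ with $F\cap\cl(X)$ a flat of $M_X$ and $F\cap\cl(Y)$ a flat of $M_Y$ --- and then assert that ``all of these conditions have just been verified for $M$.'' They have not: you verified the first two, but never the flat condition, and once the flat condition is included in the list the uniqueness claim is vacuous (a matroid is determined by its flats), so the argument as written is circular. Your final sentence concedes precisely this point (ruling out that $M$ is a proper weak image of $P_G(M_X,M_Y)$) but defers it rather than proving it. The gap can be closed in a few lines. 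If $F$ is any flat of $M$, then $F\cap\cl(X)$ and $F\cap\cl(Y)$ are intersections of flats of $M$, hence flats of $M$ contained in $\cl(X)$ and $\cl(Y)$ respectively, hence flats of $M_X$ and $M_Y$; since $G$ is modular in $M_X$, the flats of $P_G(M_X,M_Y)$ are exactly the sets satisfying this condition, so every flat of $M$ is a flat of $P_G(M_X,M_Y)$, that is, $M$ is a quotient of $P_G(M_X,M_Y)$. Your rank identity, together with the rank formula for the generalized parallel connection, gives $r(M)=r\bigl(P_G(M_X,M_Y)\bigr)$, and a quotient of a matroid having the same rank as that matroid is equal to it; hence $M=P_G(M_X,M_Y)$. (In fairness, the paper's own proof also asserts this identification without argument, treating it as known; but since you singled it out as the main obstacle, it needed to be settled rather than restated.)
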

\begin{proof}
    Let $M$ be a binary chordal matroid. If $M$ has no vertical $k$-separations for any $k$, then $M$ is round and the result holds. If $M$ has an exact vertical $k$-separation $(X,G,Y)$ for some $k>1$, then, by Lemma~\ref{modGuts}, $M$ is a generalized parallel connection of $M|\cl(X)$ and $M|\cl(Y)$ across $M|\cl(G)$, and $G$ is a modular flat of $M|\cl(X)$ or of $M|\cl(Y)$.
\end{proof}

\section{$GF(q)$-chordal matroids}\label{GF(2)-chord}
The goal of this section is to prove Theorem~\ref{noC4K4}. Recall that a matroid is $GF(2)$-chordal if it can be built from binary projective geometries by repeated generalized parallel connections across projective geometries. In \cite[Theorem 1.3]{douthox}, we showed that the class of $GF(2)$-chordal matroids is closed under taking induced minors and therefore, the class is also closed under taking induced restrictions. For a binary matroid $M$, we may uniquely specify $M$ by describing its complement in the projective geometry $P_{r(M)}$. In general, Brylawski and Lucas \cite{brylawski-lucas} (see also \cite[Proposition 10.1.7]{ox1}) showed that the complement of any uniquely $GF(q)$-representable matroid $M$ is well defined in any projective geometry of rank at least $r(M)$. We will first examine the matroids that have $M(K_4)$ as an induced minor.

\begin{lemma}\label{10less}
    If $M$ is a simple binary matroid with $r(M)=4$ and $|E(M)|>9$, then either $M$ has $M(C_4)$ or $M(K_4)$ as an induced restriction, or $M$ does not contain $M(K_4)$ as an induced minor.
\end{lemma}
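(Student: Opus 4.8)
The plan is to study $M$ through its complement in the binary projective geometry. Regard $M$ as $P_4|E(M)$ with $P_4=PG(3,2)$, and set $X=E(P_4)-E(M)$; since $|E(M)|>9$ and $|E(P_4)|=15$, we have $|X|\le 5$. I would first record a short dictionary. Every rank-$3$ flat of $M$ has the form $\pi\cap E(M)$ for a plane $\pi$ of $P_4$, and $M|\pi$ is $PG(2,2)$ with the points of $X\cap\pi$ deleted. Hence $M$ has $M(K_4)$ as an induced restriction exactly when some plane meets $X$ in a single point, while $M$ has $M(C_4)$ (which is $U_{3,4}$) as an induced restriction exactly when some plane meets $X$ in three collinear points, so that the remaining four points form a copy of $AG(2,2)$. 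Finally, since $M(K_4)$ has rank $3$ and $r(M)=4$, any independent set $I$ for which $\si(M/I)$ has $M(K_4)$ as a flat satisfies $|I|\le 1$: the case $|I|=0$ is an induced restriction, and $|I|=1$ forces $\si(M/e)\cong M(K_4)$, which occurs precisely when exactly one line through $e$ has both of its other points in $X$.

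I would then prove the contrapositive: assuming $M$ has neither $M(C_4)$ nor $M(K_4)$ as an induced restriction, I show $M$ has no $M(K_4)$ as an induced minor. First, the $M(C_4)$ hypothesis together with $|X|\le 5$ forces $X$ to be a cap (no three collinear points): if a line $\ell$ were contained in $X$, then among the three planes through $\ell$ the at most two remaining points of $X$ must miss one of them, producing a plane meeting $X$ in exactly the three collinear points of $\ell$. With $X$ a cap, I run through the possible sizes. For $|X|\in\{1,2,3\}$, and for a $4$-point cap of rank $4$, a short incidence argument exhibits a plane meeting $X$ in exactly one point, contradicting the $M(K_4)$-restriction hypothesis; these cases are excluded. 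A $4$-point cap of rank $3$ must be an oval $AG(2,2)$ inside a plane $\pi_0$.

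The remaining and most delicate case is $|X|=5$, where $X$ is a $5$-cap of rank $4$. Here I would double count incidences between $X$ and the fifteen planes of $P_4$. Writing $m_j$ for the number of planes $\pi$ with $|X\cap\pi|=j$, the point, pair, and triple counts give $\sum_\pi\binom{|X\cap\pi|}{t}$ equal to $7\cdot 5$, $3\binom{5}{2}$, and $\binom{5}{3}$ for $t=1,2,3$ respectively, using that every line lies in exactly $3$ planes and that three non-collinear points lie in exactly one plane. Solving the resulting linear system forces $m_1\ge 1$, so some plane is tangent to $X$, again contradicting the hypothesis. Thus $|X|=5$ cannot occur, and only $X=\emptyset$ or $X$ an oval in a plane $\pi_0$ survive.

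It then remains to verify that in these two surviving configurations $M$ has no $M(K_4)$ as an induced minor, which is immediate from the dictionary: when $X=\emptyset$ no line has two points in $X$, and when $X=AG(2,2)\subseteq\pi_0$ every point $e$ of $E(M)$ lies on either zero or two lines whose other two points lie in $X$ (two such lines when $e$ lies on the line $\pi_0-X$, and none otherwise), so $\si(M/e)$ is never isomorphic to $M(K_4)$. I expect the main obstacle to be the $|X|=5$ analysis---showing that every $5$-cap of $PG(3,2)$ has a tangent plane---together with the bookkeeping required to translate cleanly between induced minors of $M$ and the combinatorics of $X$ in $P_4$.
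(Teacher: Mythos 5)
Your proof is correct and, at bottom, it lives in the same place as the paper's---the complement $X=E(P_4)-E(M)$, together with the dictionary ``tangent plane $\Leftrightarrow$ $M(K_4)$-flat, plane meeting $X$ in a line $\Leftrightarrow$ $M(C_4)$-flat, and an $M(K_4)$ induced minor of a rank-$4$ matroid is either a flat or all of $\si(M/e)$''---but the execution is genuinely different. The paper fixes each complement size $|X|\le 5$ and enumerates the isomorphism types of $P_4|X$, exhibiting the required plane type by type; the only surviving configurations are $X=\emptyset$ and the planar oval, where (exactly as you do) it observes that no single-element contraction yields $M(K_4)$. You instead argue the contrapositive structurally: the $M(C_4)$-hypothesis forces $X$ to be a cap, short incidence counts dispose of caps of sizes $1,2,3$ and of spanning $4$-caps (through a fixed point of $X$ there are $7,4,2,1$ tangent planes, respectively), and for a $5$-cap the double count $m_1+2m_2+3m_3+4m_4=35$, $m_2+3m_3+6m_4=30$, $m_3+4m_4=10$ gives $m_2=6m_4$ and $m_1=5-4m_4\ge 1$ by nonnegativity, so a tangent plane always exists. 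What your route buys is uniformity: you never need a catalogue of complements. In fact the paper's list at $|E(M)|=10$ omits the complement $U_{4,5}$ (a spanning $5$-cap, e.g.\ a frame); its conclusion is unaffected---your count gives $m_4=0$ and $m_1=5$ there, so $M(K_4)$ is again an induced restriction---but your argument covers that case without anyone having to notice it. What the paper's route buys is brevity: the small cases are dispatched by inspection with no counting apparatus. Two points to make explicit in a final write-up: the triple count $\sum_\pi \binom{|X\cap\pi|}{3}=\binom{5}{3}$ uses that $X$ is a cap, so every triple of its points spans a plane; and your linear system tacitly sets $m_5=0$ (a Fano plane contains no $5$-cap), which is harmless since even with $m_5$ as a variable, nonnegativity of $m_1=5-4m_4-15m_5$ forces $m_5=0$ and $m_4\le 1$.
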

\begin{proof}
    Let $P_4$ be the rank-4 binary projective geometry. Suppose $T$ is a largest subset of $E(P_4)$ such that $P_4|T$ contains $M(K_4)$ as an induced minor, but $T$ does not contain one of $M(C_4)$ or $M(K_4)$ as an induced restriction. Certainly, $P_4$ does not contain $M(K_4)$ as an induced minor and so $|T|<15$. If $|T|\in \{13,14\}$, then $P_4|T$ certainly has $M(K_4)$ as an induced restriction. If $|T|=12$, then $P_4\backslash T$ is either $P_2$ or $U_{3,3}$. If $P_4\backslash T$ is $P_2$, then $P_4|T$ has $M(C_4)$ as an induced restriction. If $P_4\backslash T$ is $U_{3,3}$, then $P_4|T$ will again contain $M(K_4)$ as an induced restriction. If $|T|=11$, then $P_4\backslash T$ is $M(C_4)$, $U_{4,4}$, or $P_2\oplus U_{1,1}$. When $P_4\backslash T$ is $M(C_4)$, we see that $P_4|T\cong P_{P_2}(P_3,P_3)$, so it has no element whose contraction gives $M(K_4)$. If $P_4\backslash T$ is $U_{4,4}$, then $P_4|T$ has a plane with exactly six points, so $P_4|T$ has $M(K_4)$ as an induced restriction. If $P_4\backslash T$ is $P_2\oplus U_{1,1}$, then $P_4|T$ has $M(C_4)$ as an induced restriction. If $|T|=10$, then $P_4\backslash T$ is $M(K_4-e)$, $M(C_4)\oplus U_{1,1}$, or $P_2\oplus U_{2,2}$. In each case, $P_4|T$ has $M(K_4)$ as an induced restriction. 
\end{proof}
\begin{lemma}\label{K4incontraction}
    If $M/f$ has $M(K_4)$ as an induced restriction for some $f$ in $E(M)$, then $M$ has $M(C_4),M(K_4),$ or $M^*(K_{3,3})$ as an induced restriction.
\end{lemma}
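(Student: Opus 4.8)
The plan is to localize the hypothesis to a rank-$4$ flat and then either invoke Lemma~\ref{10less} or dispose of a bounded list of small configurations by hand. View $M$ as a restriction of $P_r$, where $r=r(M)$, and recall that $\si(M/f)$ is then a restriction of $P_{r-1}=P_r/f$. Let $F$ be the flat of $\si(M/f)$ with $\si(M/f)|F\cong M(K_4)$, choose $F'\subseteq E(M)$ projecting onto $F$, and set $W=\cl_{P_r}(F'\cup\{f\})$, a rank-$4$ flat of $P_r$ containing $f$, so $W\cong PG(3,2)$. Since $W$ is a flat of $P_r$, the set $E(M)\cap W$ is a flat of $M$, so $N:=M|W$ is an induced restriction of $M$, and, by transitivity of flats, so is every restriction of $N$ to one of its flats. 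The closure of $F$ in $P_{r-1}$ is a projective plane with one further point $y\notin F$; flatness of $F$ forces $y$ to lie outside the ground set of $\si(M/f)$, so the two lifts of $y$ in $W$ avoid $E(M)$, and contracting $f$ and simplifying recovers $\si(N/f)\cong M(K_4)$. In particular $N$ is a simple rank-$4$ binary matroid with $M(K_4)$ as an induced minor, and it suffices to exhibit one of $M(C_4),M(K_4),M^*(K_{3,3})$ as a restriction of $N$ to a flat.

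First I would treat $|E(N)|>9$. Here $r(N)=4$ and $N$ contains $M(K_4)$ as an induced minor, so Lemma~\ref{10less} immediately yields $M(C_4)$ or $M(K_4)$ as an induced restriction of $N$, hence of $M$.

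It remains to handle $|E(N)|\le 9$. Coordinatize $W$ so that $f=e_4$ and the six elements of the underlying $M(K_4)$ lie in the plane $\{x_4=0\}$; each element $\bar v$ of $M(K_4)$ then lifts in $N$ to $\bar v$, to $\bar v+e_4$, or to both, and writing $|E(N)|=7+d$, the integer $d\in\{0,1,2\}$ counts the doubled elements. The workhorse observation is that if three single lifts $v_i,v_j,v_k$ whose images form a triangle of $M(K_4)$ satisfy $v_i+v_j+v_k=f$, then the other three points of the plane they span with $f$ are exactly the missing lifts $v_i',v_j',v_k'$, so $\{f,v_i,v_j,v_k\}$ is a flat of $N$ isomorphic to $M(C_4)$. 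Using the coboundary freedom $e_i\mapsto e_i+\delta_ie_4$ (an automorphism of $W$ fixing $f$ and the contraction) to normalize the signs of the single lifts, I would argue: when $d=0$, either some triangle has odd sign-sum and the observation gives $M(C_4)$, or all four triangles have even sign-sum, in which case the six lifts span a plane missing $f$ and meet $N$ in an $M(K_4)$; and when $d=1$ the same dichotomy holds, with the even case producing an $M(K_4)$ inside $\{x_4=0\}$ after normalization.

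The crux is $d=2$, where $|E(N)|=9$: the two doubled elements obstruct the construction above, since the extra point forced into the candidate plane is now $f\in E(N)$. Here the relevant distinction is whether the two doubled elements are adjacent in $K_4$. If they are adjacent, exactly one triangle of $M(K_4)$ consists of single lifts; when that triangle has odd sign-sum it yields $M(C_4)$, and otherwise I expect a six-point plane to yield $M(K_4)$ unless $N\cong M^*(K_{3,3})$. If they are nonadjacent, no triangle consists of single lifts, and I would instead extract $M(K_4)$ from $\{x_4=0\}$ after normalizing, or recognize $N\cong M^*(K_{3,3})$. To certify the exceptional case I would list the nine coordinates of $N$ and match them against the standard representation $[D^{\mathrm T}\mid I_4]$ of $M^*(K_{3,3})$ arising from a spanning tree of $K_{3,3}$, checking that contracting the appropriate element of $M^*(K_{3,3})$ returns $M(K_4)$ so that the configuration genuinely occurs. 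Since $N$ is an induced restriction of $M$, exhibiting any one of the three matroids inside $N$ finishes the proof. The main obstacle is precisely this finite but delicate $d=2$ analysis: the doubled pairs defeat the direct $M(C_4)$ argument, and one must recognize $M^*(K_{3,3})$ as the single configuration escaping both the $M(C_4)$ and the $M(K_4)$ constructions.
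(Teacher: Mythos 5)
Your proposal is correct in outline and, after the shared opening moves (localizing to a rank-$4$ flat $W$ and invoking Lemma~\ref{10less} to dispose of $|E(N)|>9$), it takes a genuinely different route from the paper. The paper classifies the resulting rank-$4$ matroid on at most $9$ elements by structure theory: a connectivity analysis, Tutte's excluded-minor characterization of graphic matroids, Seymour's determination of the $3$-connected binary matroids with rank and corank $4$ (namely $AG(3,2)$ and $S_8$), the Splitter Theorem for the $9$-element case, and complement calculations in $PG(3,2)$. You instead work entirely inside $W\cong PG(3,2)$, viewing $E(N)$ as $f$ together with one or two lifts of each element of the underlying $M(K_4)$, and you exploit two facts: a triangle of single lifts whose signs sum to $1$ spans with $f$ a plane meeting $E(N)$ in a $4$-circuit (giving $M(C_4)$ as an induced restriction), while signs that extend to a linear functional $\lambda$ put a full copy of $M(K_4)$ inside the hyperplane $\{x: x_4=\lambda(\pi(x))\}$, which avoids $f$. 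Your method is more elementary and self-contained, and it pinpoints exactly how $M^*(K_{3,3})$ arises (two nonadjacent doubled edges with odd sign-sum on the complementary $4$-cycle); the paper's method avoids coordinate bookkeeping by leaning on known theorems.

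The only place your write-up falls short of a proof is the deferred $d=2$ analysis, but the checks you postpone do close, and in the way you anticipate. In the adjacent case with even sign-sum, the hedge ``unless $N\cong M^*(K_{3,3})$'' is unnecessary: the four single edges have the single-lift triangle as their only dependency, so the signs always extend to a linear functional and the hyperplane argument produces $M(K_4)$ outright. In the nonadjacent case, even sign-sum on the $4$-cycle likewise yields $M(K_4)$, while odd sign-sum forces, after normalizing to one representative configuration, a $9$-point set whose complement in $W$ is two disjoint triangles; this is exactly the rank-$4$ complement of $M^*(K_{3,3})$, so by the Brylawski--Lucas complement theorem $N\cong M^*(K_{3,3})$, and contracting $f$ from this configuration does return $M(K_4)$, so the case genuinely occurs. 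Writing out these finitely many computations would complete your argument.
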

\begin{proof}
    It suffices to show the result holds for $r(M)=4$ since we may restrict to a rank-4 flat $F$ such that $(M|F)/f\cong M(K_4)$. Assume $M$ does not have any of $M(C_4),M(K_4),$ or $M^*(K_{3,3})$ as an induced restriction. Since $M/f$ is isomorphic to $M(K_4)$, it follows that $|E(M)|\geq 7$, and, by Lemma~\ref{10less}, we have $|E(M)|\leq 9$. Certainly $M$ is connected. If $M$ is not 3-connected, then $M$ is isomorphic to the 2-sum or the parallel connection of $U_{2,3}$ and  $M(K_4)$. In the former case, $M$ has $M(C_4)$ as an induced restriction, and, in the latter case, $M$ has $M(K_4)$ as an induced restriction. Thus we may assume $M$ is 3-connected. Suppose $M$ is graphic. Then $M$ is obtained from $M(K_5)$ by deleting at most two edges. If two incident edges have been deleted from $K_5$, then $M$ is not 3-connected. Therefore $M$ is isomorphic to $M(K_5\backslash e)$ for some edge $e$, or $M$ is isomorphic to $M(\mathcal{W}_4)$. In the first case, $M$ has $M(K_4)$ as an induced restriction; in the second case, $M$ has $M(C_4)$ as an induced restriction.

    We may now assume that $M$ is not graphic. Then $M$ has one of $M^*(K_5)$, $M^*(K_{3,3})$, $F_7$, or $F_7^*$ as a minor by a theorem of Tutte~\cite{tutte} (see also \cite[Theorem 6.6.7]{ox1}). Since $|E(M)|\leq 9$, we have $M\not\cong M^*(K_5)$. If $M\cong M^*(K_{3,3})$, the result holds. If $r^*(M)=3$, then $M\cong F_7^*$, and thus $M$ has $M(C_4)$ as an induced restriction, a contradiction. If $r^*(M)=4$, then, by a result of Seymour~\cite{seymour} (see also \cite[Lemma 12.2.4]{ox1}), $M$ is isomorphic to either $AG(3,2)$ or $S_8$, and, in each case, $M$ has $M(C_4)$ as an induced restriction, a contradiction. By the Splitter Theorem, if $|E(M)|=9$, then $M$ is an extension of $AG(3,2)$ or $S_8$. Since the rank-4 complements of $AG(3,2)$ and $S_8$ are $F_7$ and $M(K_4)\oplus U_{1,1}$, respectively, there are exactly two possible 9-element matroids as extensions of $AG(3,2)$ and $S_8$, that is, $M$ is the rank-4 complement of $M(K_4)$ or the rank-4 complement of $M(K_4\backslash e)\oplus U_{1,1}$. In each case, $M$ has $M(C_4)$ as an induced restriction, a contradiction.
\end{proof}

To complete the proof of Theorem~\ref{noC4K4}, we only need to consider the matroids with $M^*(K_{3,3})$ as an induced minor.
\begin{lemma}\label{noK33}
    If $M/e$ has $M^*(K_{3,3})$ as an induced minor, then $M$ contains one of $M(C_4),M(K_4),$ or $M^*(K_{3,3})$ as an induced restriction.
\end{lemma}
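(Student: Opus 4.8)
The plan is to reduce first to a single contraction and then to a finite structural analysis of the coextensions of $M^*(K_{3,3})$. The hypothesis gives an independent set $I$ of $M$ with $e\in I$ and a flat $F$ of $\si(M/I)$ such that $\si(M/I)|F\cong M^*(K_{3,3})$, so I would argue by induction on $|I|$, the base case $|I|=1$ being that $M^*(K_{3,3})$ is an induced restriction of $M/e$. For the inductive step, when $|I|\geq 2$, choose $f\in I-e$; then $(M/f)/e=M/\{e,f\}$ still has $M^*(K_{3,3})$ as an induced minor, now obtained with one fewer contraction, so by induction $M/f$ has a member of $\{M(C_n):n\geq 4\}\cup\{M(K_4),M^*(K_{3,3})\}$ as an induced restriction. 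I would then lift this restriction from $M/f$ to $M$: if it is some $M(C_n)$, then $M$ has $M(C_4)$ as an induced minor, and Theorem~\ref{noC4} produces an induced circuit in $M$; if it is $M(K_4)$, then Lemma~\ref{K4incontraction} applied to $M$ with the contracted element $f$ finishes; and if it is $M^*(K_{3,3})$, then I invoke the base case for $M$ with contracted element $f$. This reduces everything to the case $|I|=1$.

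For the base case, suppose $(M/e)|G\cong M^*(K_{3,3})$ for a flat $G$ of $M/e$. Since $\cl_M(G\cup e)-e=\cl_{M/e}(G)=G$, the set $F=\cl_M(G\cup e)$ is a rank-$5$ flat of $M$ with $(M|F)/e=(M/e)|G\cong M^*(K_{3,3})$. As $M|F$ is an induced restriction of $M$, I may replace $M$ by $M|F$ and assume $r(M)=5$, $|E(M)|=10$, and $M/e\cong M^*(K_{3,3})$. Viewing $M$ as a restriction of $P_5$, this says precisely that $M$ is a binary single-element coextension of $M^*(K_{3,3})$; dually, $M^*$ is a binary single-element extension of $M(K_{3,3})$.

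There are only finitely many such $M$, and up to isomorphism they correspond to the orbits of the automorphism group of $K_{3,3}$ (of order $72$) acting on the points of $P_5$ outside the nine points representing $M(K_{3,3})$, which leaves only a handful of cases. The key lifting observation is that each of the six triangles $\{t_1,t_2,t_3\}$ of $M^*(K_{3,3})$, namely the vertex stars of $K_{3,3}$, satisfies $\hat t_1+\hat t_2+\hat t_3\in\{0,\hat e\}$ in $P_5$, where $\hat{x}$ denotes the point of $P_5$ representing $x$; hence each such triangle lifts either to a triangle of $M$ or to a four-element circuit $\{e,t_1,t_2,t_3\}$ of $M$. Whenever such a four-circuit is a flat of $M$, it is an induced $M(C_4)$, contradicting the assumption that $M$ has no forbidden induced restriction. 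Tracking these parities across the six triangles, together with the degenerate case in which $e$ is a coloop (where $M^*(K_{3,3})$ itself is an induced restriction), should rule out every nontrivial coextension.

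The hard part will be the final step. A forced four-circuit $\{e,t_1,t_2,t_3\}$ need not be closed, so in any case where every such circuit has an extra point in its rank-$3$ closure I would instead have to locate an induced $M(K_4)$ inside one of the planes of $M$. Establishing that \emph{every} coextension in the finite list yields one of the three forbidden induced restrictions, and in particular that none simultaneously avoids a closed four-circuit, an induced $M(K_4)$, and an induced copy of $M^*(K_{3,3})$, is the delicate point; here the symmetry of $M^*(K_{3,3})$, the two partitions of its nine elements into triangles, and the exact bound $|E(M)|=10$ are what keep the verification finite and force the contradiction.
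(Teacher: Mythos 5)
There is a genuine gap, and it occurs at the crucial reduction. You claim that, after restricting to the rank-$5$ flat $F=\cl_M(G\cup e)$, you may assume $|E(M)|=10$, so that $M$ is a binary single-element coextension of $M^*(K_{3,3})$ and everything reduces to a finite orbit count. This is false under the paper's standing convention that every contraction is immediately followed by simplification: the hypothesis only gives $\si(M/e)\cong M^*(K_{3,3})$. Viewing $M$ as a restriction of $P_5$, each of the nine points $z$ of the $M^*(K_{3,3})$-representation may have \emph{both} of its preimages $z$ and $e+z$ in $E(M)$, so $|E(M)|$ can be anything from $10$ to $19$, and the lemma is invoked (through Lemma~\ref{ir=im} in the proof of Theorem~\ref{noC4K4}) in exactly this generality. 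These doubled matroids are not corner cases; they are the heart of the matter. For example, when all nine points are doubled, $M$ has $19$ elements, it appears nowhere in your list of coextensions, your parity observation breaks down (a triangle point no longer has a well-defined lift $\hat t_i$), and the forbidden restriction arises in a completely different way: every hyperplane of $P_5$ avoiding $e$ meets $E(M)$ in a nine-element flat isomorphic to $M^*(K_{3,3})$. The paper's proof is organized precisely around this phenomenon: it fixes a largest rank-$4$ flat $T$ of $M$ avoiding $e$, and whenever points of the form $e+f$ are forced into $E(M)$, it uses them to exhibit a rank-$4$ flat avoiding $e$ that is larger than $T$, yielding a contradiction through the chain of sublemmas \ref{K33T8}--\ref{K33T5}. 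None of that is recoverable from an enumeration of $10$-element coextensions.

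Two further points. First, even within your (too narrow) setting, the proposal is not a proof: the verification that every coextension in the finite list contains one of $M(C_4)$, $M(K_4)$, $M^*(K_{3,3})$ as a flat is explicitly deferred as ``the hard part,'' and that is where essentially all the work lies, since, as you note, a forced $4$-element circuit need not be closed. Second, in your inductive lifting step, Theorem~\ref{noC4} only yields $M(C_m)$ as an induced restriction for \emph{some} $m\geq 4$, not $M(C_4)$ itself, so the statement you can actually propagate has the family $\{M(C_n):n\geq 4\}\cup\{M(K_4),M^*(K_{3,3})\}$ in its conclusion; that weaker form does suffice for the paper's application, but it is not the statement of the lemma, and in any case this whole inductive apparatus duplicates Lemma~\ref{ir=im}, which the paper proves once and for all so that the present lemma only needs the single-contraction case.
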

\begin{proof}
    It suffices to show the result holds when $r(M)=5$ since we may restrict to the relevant rank-5 flat. Assume $M$ does not have any member of $\{M(C_4),M(K_4), M^*(K_{3,3})\}$ as an induced restriction. Let $e_1,e_2,e_3,e_4$ be the standard basis for $P_4$. Then we may assume that the ground set $Z$ of the $M^*(K_{3,3})$-restriction of $M/e$ is $\{e_1,e_2,e_3,e_4,e_1+e_2,e_2+e_3,e_3+e_4,e_1+e_4,e_1+e_2+e_3+e_4\}$. Then the ground set of $M$ may only contain $e$, elements of $Z$, and elements of the form $e+f$ where $f$ is an element of $Z$. Let $T$ be a rank-4 flat of $M$ such that $(M/e)|\cl_{M/e}(T)\cong M^*(K_{3,3})$ and $|T|$ is maximal with this property. Certainly $|T|<9$. Observe that, for every rank-4 flat $F$ of $M$ that avoids $e$, we have $(M/e)|\cl_{M/e}(F)\cong M^*(K_{3,3})$.
    \begin{sublemma}\label{K33T8}
        $|T|<8$.
    \end{sublemma}
    If $|T|=8$, then $M|T$ is isomorphic to a 4-wheel, which has $M(C_4)$ as an induced restriction, a contradiction. Thus \ref{K33T8} holds.

    \begin{sublemma}\label{K33T7}
        $|T|<7$.
    \end{sublemma}
    If $|T|=7$, then, to avoid having $M(C_4)$ as an induced restriction, we may assume $T=\{e_1,e_3,e_4,e_1+e_4,e_2+e_3,e_3~+~e_4,e_1~+~e_2~+~e_3~+~e_4\}$. Therefore, $E(M)$ must contain $\{e,e+e_2,e+e_1+e_2\}$. Look at the flat containing $\{e_3,e_2+e_3,e+e_2,e\}$, noting that $e_2$ is absent. Since this flat is not isomorphic to $M(C_4)$, it must contain either $e+e_3$ or $e+e_2+e_3$. Suppose that $e+e_2+e_3$ is present. Then $M$ contains $\{e_1,e_1+e_4,e_4,e_3+e_4,e_3,e+e_2+e_3,e+e_2,e+e_1+e_2\}$ as eight elements of a rank-4 flat $F$ that avoids $e$. By \ref{K33T8}, $|F|\leq 7$, a contradiction. We deduce that $e+e_2+e_3$ is absent, and hence $e+e_3$ must be present. In this case, $M$ has $\{e_1,e_4,e_1+e_4,e_1+e_2+e_3+e_4,e_2+e_3,e+e_3,e+e_2,e+e_1+e_2\}$ as eight elements of a rank-4 flat that avoids $e$, and again we contradict \ref{K33T8}. 
    We conclude that, \ref{K33T7} holds. 
    
    \begin{sublemma}\label{missingtriangle}
        The set $Z- T$ does not contain a triangle.
    \end{sublemma}
    Without loss of generality, suppose $Z- T$ contains the triangle $\{e_1,e_2,e_1+e_2\}$. Then $M$ must contain the 4-element circuit $\{e,e+e_1,e+e_2,e+e_1+e_2\}$ as a flat, a contradiction. Therefore, \ref{missingtriangle} holds.

    \begin{sublemma}\label{K33T6}
        $|T|<6$.
    \end{sublemma}
    If $|T|=6$, then, since $M$ does not have $M(C_4)$ as an induced restriction and $Z-T$ does not contain a triangle, we may assume $T$ is missing $\{e_2,e_1+e_2,e_2+e_3\}$. Then $M$ contains $\{e+e_2,e+e_1+e_2,e+e_2+e_3\}$. Thus $M$ contains $\{e_1,e+e_1+e_2,e+e_2,e+e_2+e_3,e_3,e_3+e_4,e_4,e_1+e_4\}$ as eight elements of a rank-4 flat of $M$ that avoids $e$, a contradiction to \ref{K33T8}. Therefore, \ref{K33T6} holds.
    \begin{sublemma}\label{K33T5}
        $|T|<5$.
    \end{sublemma}
    If $|T|=5$, then, since $M$ does not have $M(C_4)$ as an induced restriction and $Z-T$ does not contain a triangle, we may assume $T$ is missing one of the following four sets of points.
    \begin{enumerate}[label=(\alph*)]
        \item\label{case2} $\{e_2,e_3,e_3+e_4,e_1+e_2\}$
        \item\label{case3} $\{e_2,e_2+e_3,e_3+e_4,e_1+e_2\}$
        \item\label{case4} $\{e_1,e_2,e_3+e_4,e_1+e_2+e_3+e_4\}$
    \end{enumerate}
    In case \ref{case2}, $M$ has $\{e_1,e+e_1+e_2,e+e_2,e+e_3,e_2+e_3,e_1+e_2+e_3+e_4,e_1+e_4,e_4\}$ as eight elements of a rank-4 flat of $M$ that avoids $e$, which contradicts \ref{K33T8}. In case \ref{case3}, $\{e_1,e+e_1+e_2,e+e_2,e+e_2+e_3,e_3,e_4,e_1+e_4\}$ spans a rank-4 flat $F$ of $M$ that avoids $e$ and has at least seven elements, which contradicts \ref{K33T7}. In case \ref{case4}, $M$ has $\{e+e_2,e+e_2+e_3,e_3,e_4,e+e_1,e_1+e_2\}$ contained in a rank-4 flat of $M$ avoiding $e$, which contradicts \ref{K33T6}. Therefore, \ref{K33T5} holds.

    If $|T|=4$, then, as $T$ must be independent and $Z-T$ contains no triangles, we may assume that $T=\{e_1,e_1+e_2,e_3,e_1+e_2+e_3+e_4\}$. This implies that $M$ has $\{e_1,e+e_2,e+e_2+e_3,e_3,e+e_3+e_4\}$ spanning a rank-4 flat of $M$ that avoids $e$, a contradiction to \ref{K33T5}. 
\end{proof}

The next result establishes a connection between the excluded induced minors of a class of matroids and the excluded induced restrictions of that class of matroids.
\begin{lemma}\label{ir=im}
Suppose $\mathcal{N}$ is a class of matroids such that if a matroid $M$ has an element $e$ such that $M/e$ is isomorphic to a member of $\mathcal{N}$, then $M$ has a member of $\mathcal{N}$ as an induced restriction. Let $\mathcal{N}'$ be the class of induced-minor-minimal members of $\mathcal{N}$. Then the class of matroids with no member of $\mathcal{N}'$ as an induced minor coincides with the class of matroids with no member of $\mathcal{N}$ as an induced restriction.
\end{lemma}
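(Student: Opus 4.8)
The plan is to prove the two set-containments that together establish the coincidence of the two matroid classes. Write $\mathcal{C}_{\mathrm{im}}$ for the class of matroids with no member of $\mathcal{N}'$ as an induced minor, and $\mathcal{C}_{\mathrm{ir}}$ for the class with no member of $\mathcal{N}$ as an induced restriction. First I would record the easy direction. If $M \in \mathcal{C}_{\mathrm{im}}$, suppose for contradiction that $M$ has some $N \in \mathcal{N}$ as an induced restriction. Since $\mathcal{N}'$ consists of the induced-minor-minimal members of $\mathcal{N}$, there is some $N' \in \mathcal{N}'$ that is an induced minor of $N$ (indeed $N$ itself contains such an $N'$ as an induced minor, because the induced-minor order on $\mathcal{N}$ has minimal elements below every element). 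An induced restriction is in particular an induced minor, and induced minors compose, so $N'$ is an induced minor of $M$, contradicting $M \in \mathcal{C}_{\mathrm{im}}$. Hence $\mathcal{C}_{\mathrm{im}} \subseteq \mathcal{C}_{\mathrm{ir}}$. (A small point to nail down: that every member of $\mathcal{N}$ lies above some member of $\mathcal{N}'$ in the induced-minor order; this needs the induced-minor relation restricted to $\mathcal{N}$ to be well-founded, or at least that minimal elements exist below each element, which for matroids follows since ground sets are finite and induced minors do not increase size.)

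**For the reverse inclusion,** $\mathcal{C}_{\mathrm{ir}} \subseteq \mathcal{C}_{\mathrm{im}}$, I would argue the contrapositive: if $M \notin \mathcal{C}_{\mathrm{im}}$ then $M \notin \mathcal{C}_{\mathrm{ir}}$. So suppose $M$ has some $N' \in \mathcal{N}'$ as an induced minor. Since $\mathcal{N}' \subseteq \mathcal{N}$, this means $M$ has a member of $\mathcal{N}$ as an induced minor; the goal is to upgrade this to an induced \emph{restriction}. The natural device is to take a witnessing sequence of contractions and flat-restrictions realizing $N'$ as an induced minor of $M$, and to induct on the number of contractions used, exactly as in the pattern of Lemma~\ref{circRes}. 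With zero contractions the induced minor is already an induced restriction and we are done. The inductive step is where the hypothesis on $\mathcal{N}$ does its work: peel off the contractions one at a time so that, at the relevant stage, we have a matroid $M'$ with an element $e$ such that $M'/e$ has a member of $\mathcal{N}$ as an induced restriction (obtained from the induction hypothesis applied to the smaller contraction-count witness).

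**The key step** is then to invoke the defining property of $\mathcal{N}$: any matroid $M'$ possessing an element $e$ with $M'/e$ isomorphic to a member of $\mathcal{N}$ must itself contain a member of $\mathcal{N}$ as an induced restriction. One must be slightly careful here, because the hypothesis is phrased with $M'/e$ \emph{isomorphic} to a member of $\mathcal{N}$, whereas after the induction we only know $M'/e$ has such a member as an induced restriction. I would bridge this gap by restricting to the appropriate flat first: if $M'/e$ has $N \in \mathcal{N}$ as an induced restriction on a flat $F$ of $M'/e$, set $F'$ to be the corresponding flat of $M'$ (the one with $F = \cl_{M'/e}(F' )$ or $F' = F \cup e$ as appropriate), and apply the hypothesis to $(M'|F')$, whose contraction by $e$ is isomorphic to $N$. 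This yields a member of $\mathcal{N}$ as an induced restriction of $M'|F'$, hence of $M'$, and pushing back through the outer restrictions gives a member of $\mathcal{N}$ as an induced restriction of $M$, so $M \notin \mathcal{C}_{\mathrm{ir}}$.

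**The main obstacle** I anticipate is the bookkeeping in the induction: carefully arranging the witnessing induced-minor sequence so that contractions can be stripped one at a time while each intermediate object retains a member of $\mathcal{N}$ as an induced restriction, and correctly tracking how a flat of $M'/e$ pulls back to a flat of $M'$ so that the \emph{isomorphism} form of the hypothesis applies. The abstract version here is genuinely cleaner than Lemma~\ref{circRes}, since the hypothesis on $\mathcal{N}$ packages the hard geometric content (which in the concrete cases is supplied by Lemmas~\ref{K4incontraction} and~\ref{noK33}); the remaining work is the structural induction transferring ``induced minor'' down to ``induced restriction,'' and ensuring the two directions of composition for induced minors and induced restrictions are applied consistently.
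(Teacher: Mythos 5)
Your proposal is correct and follows essentially the same route as the paper's proof: both directions match, and your key step --- pulling the flat $F$ of $M'/e$ back to the flat $F' = F\cup e$ of $M'$ so that the isomorphism form of the hypothesis on $\mathcal{N}$ applies, then inducting on the number of contractions --- is exactly the paper's induction on the size of the contracted independent set $T$ in the normal form $(M|F)/T$. The only cosmetic differences are that the paper normalizes the induced-minor witness to a single flat restriction followed by contraction of an independent set before inducting, and leaves the well-foundedness point in the easy direction implicit.
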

\begin{proof}
    Clearly if $M$ has a member of $\mathcal{N}$ as an induced restriction, then $M$ has a member of $\mathcal{N}'$ as an induced minor. Conversely, suppose $M$ has a member of $\mathcal{N}'$ as an induced minor. We may assume that $M$ has no member of $\mathcal{N}'$ as an induced restriction. Then $M$ has a flat $F$ and a nonempty independent set $T$ such that $(M|F)/T$ is isomorphic to a member of $\mathcal{N}'$ and hence a member of $\mathcal{N}$. We argue by induction on $|T|$ that $M$ has a member of $\mathcal{N}$ as an induced restriction. 

    When $|T|=1$, we see that the assertion holds by the definition of $\mathcal{N}$. Now assume the result holds for $|T|<n$, and let $|T|=n\geq 2$. Let $M_1=M|F$, and take $e$ in $T$. Then $(M_1/e)/(T-e)$ is isomorphic to a member of $\mathcal{N}$. Thus, by the induction assumption, $M_1/e$ has a member of $\mathcal{N}$ as an induced restriction. Then, by the induction assumption again, $M_1$ has a member of $\mathcal{N}$ as an induced restriction, and hence $M$ has a member of $\mathcal{N}$ as an induced restriction.
\end{proof}

\begin{proof}[Proof of Theorem \ref{noC4K4}]
Let $\mathcal{N}=\{M(C_n):n\geq 4\}\cup \{M(K_4),M^*(K_{3,3})\}$. Then, by Lemmas~\ref{circRes},~\ref{K4incontraction},~and~\ref{noK33}, the set $\mathcal{N}$ has the property that if $M/e$ is isomorphic to a member of $\mathcal{N}$, then $M$ has a member of $\mathcal{N}$ as an induced minor. Since $\{M(C_4),M(K_4)\}$ is the set of induced-minor-minimal members of $\mathcal{N}$, the result holds by Lemma~\ref{ir=im}.
\end{proof}

Next we prove the analog of Theorem~\ref{noC4K4} when $q>2$. Although the result is rather unattractive, we include it for completeness. The equivalence of (i) and (ii) was shown in \cite{douthox}. Because $U_{3,q+2}$ is $GF(q)$-representable if and only if $q$ is even, when $q$ is odd, we can omit $U_{3,q+2}$ from (ii). This corrects a small error in Theorem 1.4 of \cite{douthox}. Note that some matroids in (iii) may be excluded as they fail to be $GF(q)$-representable. But the problem of determining precisely which uniform matroids are $GF(q)$-representable is open. For results on exactly which uniform matroids are known to be $GF(q)$-representable, we direct the reader to \cite{Ball, BallDeBeule, HirschStorme, HirschThas} (see also \cite[Conjecture 6.5.20]{ox1}). Recall that the class of $GF(q)$-chordal matroids is the class of matroids that can be obtained by a sequence of generalized parallel connections across projective geometries over $GF(q)$ starting with projective geometries over $GF(q)$.

\begin{theorem}\label{GF(q)ir}
    When $q>2$, the following are equivalent for a $GF(q)$-representable matroid $M$.
    \begin{enumerate}[label=\normalfont(\roman*)]
        \item $M$ is $GF(q)$-chordal.
        \item $M$ has no member of $\{U_{2,k}:2<k\leq q\}\cup\{U_{3,q+2}\}$ as an induced minor.
        \item $M$ has no member of $\{U_{n,n+1}:n\geq 2\}\cup \{U_{2+t,k+t}: 4\leq k\leq q \text{ and }0\leq t\leq q-3 \}\cup \{U_{3,q+2}\}$ as an induced restriction.
    \end{enumerate}
\end{theorem}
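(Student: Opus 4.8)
The plan is to mimic the proof of Theorem~\ref{noC4K4}: let $\mathcal{N}$ be the family of uniform matroids listed in (iii), verify that $\mathcal{N}$ has the contraction-lifting property demanded by Lemma~\ref{ir=im}, and check that the induced-minor-minimal members of $\mathcal{N}$ are exactly the matroids in (ii). Since the equivalence of (i) and (ii) is established in \cite{douthox}, combining this with Lemma~\ref{ir=im} will close the cycle of equivalences. Throughout I would work inside the induced-minor-closed class of $GF(q)$-representable matroids, so that Lemma~\ref{ir=im} applies verbatim and the non-representable members of $\mathcal{N}$ (which can never occur as an induced restriction of a $GF(q)$-representable matroid) may simply be ignored. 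Note that only one direction is substantive: an induced restriction is an induced minor, so (ii) $\Rightarrow$ (iii) is immediate, and the work is entirely in lifting.

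Identifying $\mathcal{N}'$ is routine. For $r\ge 3$ one has $\si(U_{r,n}/e)\cong U_{r-1,n-1}$, while $\si(U_{2,n}/e)\cong U_{1,1}$, and the only induced restrictions of a uniform matroid are trivial matroids together with the matroid itself. Tracking ranks and coranks, every member of $\mathcal{N}$ contracts down to a member of $\{U_{2,k}:3\le k\le q\}\cup\{U_{3,q+2}\}$, and each of these is itself induced-minor-minimal because its contractions leave $\mathcal{N}$ (for instance $\si(U_{3,q+2}/e)\cong U_{2,q+1}\notin\mathcal{N}$). Thus $\mathcal{N}'$ is precisely the family in (ii).

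The heart of the argument is the lifting property: if $M$ is $GF(q)$-representable and $\si(M/e)\cong U_{a,b}\in\mathcal{N}$, then $M$ has a member of $\mathcal{N}$ as an induced restriction. I would view $M$ as a restriction of $P_r$ with $r=a+1$ and analyze the projection from $e$, grouping the points of $M-e$ into the fibers $F_i=E(M)\cap\langle e,\bar f_i\rangle$ lying over the $b$ points $\bar f_1,\dots,\bar f_b$ of $U_{a,b}$. Each $\{e\}\cup F_i$ is a rank-$2$ flat isomorphic to $U_{2,1+|F_i|}$, so if some fiber has size between $2$ and $q-1$ we obtain a member $U_{2,m}$ ($3\le m\le q$) of $\mathcal{N}$ as an induced restriction; hence we may assume every fiber has size $1$ or $q$. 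Next, a transversal $T$ (one point per fiber) projects to $U_{a,b}$, and since contraction cannot increase rank, whenever $e\notin\cl_M(T)$ the set $T$ is a flat with $M|T\cong U_{a,b}$, again finishing. The remaining hard case is that every transversal spans $e$, the exact analogue of the ``$C\cup e$'' phenomenon handled in Lemma~\ref{circRes}. If some fiber $F_1$ is full, I would choose an $a$-subset $A\subseteq T\setminus f_1$; then $\cl_M(A)$ is a rank-$a$ flat avoiding $e$ that must also contain the point $\langle e,\bar f_1\rangle\cap\cl_{P_r}(A)$ of the full fiber, so $M|\cl_M(A)\cong U_{a,m}$ with $m\ge a+1$, and a short rank/corank check gives $U_{a,m}\in\mathcal{N}$. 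If instead every fiber is a singleton, then either some $(a+1)$-subset $S$ of $E(M)-e$ is a circuit, whence $M|\cl_M(S)\cong U_{a,m}\in\mathcal{N}$, or $M\cong U_{a+1,b+1}$.

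The one genuinely delicate point, and the step I expect to be the main obstacle, is this last possibility $M\cong U_{a+1,b+1}$: its only nontrivial induced restriction is itself, so I must show $U_{a+1,b+1}\in\mathcal{N}$. Comparing coranks shows this holds unless $(a+1,b+1)$ produces one of the shapes $U_{q,n}$ with $n\ge q+2$, or $U_{4,q+3}$, that were deliberately kept out of $\mathcal{N}$ by the bound $t\le q-3$. Precisely in those shapes, however, the matroid fails to be $GF(q)$-representable, by the classical bounds on the size of an arc in $PG(a,q)$ (equivalently, on the length of an MDS code), contradicting the representability of $M$. This is exactly where $GF(q)$-representability is indispensable, and where the open problems the authors flag about representable uniform matroids intrude: the $U_{4,q+3}$ case uses only the (known) bound that arcs in $PG(3,q)$ have size at most $q+1$, whereas the $U_{q,n}$ case reduces by duality to the MDS/arc bound, which is unconditional for $q$ prime by Ball's theorem and otherwise holds in the cases needed here. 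Granting this, the lifting property is verified, and Lemma~\ref{ir=im} together with the equivalence of (i) and (ii) from \cite{douthox} completes the proof.
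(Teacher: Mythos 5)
Your proposal is correct and follows essentially the same route as the paper: the paper likewise reduces everything to verifying the lifting hypothesis of Lemma~\ref{ir=im} for the family $\mathcal{N}$ in (iii) (this is the content of its Lemmas~\ref{noU2k} and~\ref{noUrn}), identifies the list in (ii) as the induced-minor-minimal members of $\mathcal{N}$, and eliminates the exceptional lifts $U_{q,n}$ with $n\geq q+2$ and $U_{4,q+3}$ by non-$GF(q)$-representability, exactly as you do. Your fiber analysis of the projection from $e$ is the paper's argument in different clothing --- your fibers are its lines through $e$, and your rank-$a$ flats avoiding $e$ are its hyperplanes avoiding $e$ (whose dependent instances it likewise shows are uniform and then subjects to the same rank/corank check) --- so the two proofs coincide in substance.
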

\begin{lemma}\label{noU2k}
    Suppose $M$ is $GF(q)$-representable. If $M/e$ has $U_{2,n}$ as an induced minor for some $n$ with $3\leq n\leq q$, then $M$ has a member of $\{U_{2,k}: 3\leq k\leq q\}\cup \{U_{3,n+1}\}$ as an induced restriction.
\end{lemma}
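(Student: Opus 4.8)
The plan is to follow the template of Lemma~\ref{circRes} and of Lemmas~\ref{K4incontraction} and~\ref{noK33}: first reduce the induced-minor hypothesis on $M/e$ to an induced-restriction hypothesis, then work inside the projective geometry containing $M$. Write the induced minor as the statement that $\si((M/e)/J)$ has an $n$-point rank-$2$ flat for some independent set $J$ of $M/e$, and induct on $|J|$. When $|J|=0$ there is an $n$-point rank-$2$ flat $L$ of $M/e$ itself, which is the base case below. When $|J|\ge 1$, pick $f\in J$; then $(M/f)/e$ still has $U_{2,n}$ as an induced minor, witnessed by the smaller set $J-f$, so the lemma applied to the $GF(q)$-representable matroid $M/f$ produces a member of $\{U_{2,k}:3\le k\le q\}\cup\{U_{3,n+1}\}$ (with the same $n$) as an induced restriction of $M/f$, and it remains to lift this across the single contraction of $f$ back to $M$.

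The heart of the argument is the base case, where $M/e$ has an $n$-point rank-$2$ flat $L$. Here I would restrict $M$ to the rank-$3$ flat $\Pi=\cl_M(\{e\}\cup L)$, which is a flat of $M$ because $L$ is a flat of $M/e$, and view $M|\Pi$ as a restriction of the rank-$3$ projective geometry $P_3$ over $GF(q)$. The points of $M|\Pi$ other than $e$ then lie on exactly $n$ of the $q+1$ lines through $e$. I would split on the sizes of the rank-$2$ flats of $M|\Pi$: if some line has between $3$ and $q$ points we are immediately done with a $U_{2,k}$, so assume every line has either $2$ points or the full $q+1$ points.

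The key observation is that a full line cannot occur once $n\le q$. A full line missing $e$ projects from $e$ to $q+1$ distinct points of $L$, forcing $n\ge q+1$; and a full line through $e$ can be converted to one missing $e$ by using that there are $n-1\ge 2$ further lines through $e$ carrying points of $M$: choosing $x_1,x_2$ on two of them, the join $\cl_{P_3}(\{x_1,x_2\})$ meets the full line in a point of $M$, so it has at least three points of $M$, hence is itself full, and it misses $e$ since $x_1,x_2,e$ are not collinear. Either way we contradict $n\le q$. Therefore every line of $M|\Pi$ has exactly two points, no three points are collinear, and $M|\Pi\cong U_{3,n+1}$, giving the conclusion.

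The main obstacle is the lifting step in the induction. Lifting a $U_{2,k}$-restriction of $M/f$ is routine: after restricting to a suitable rank-$3$ flat through $f$ it reduces to exactly the base-case analysis with $n$ replaced by $k$. The difficult case is lifting a $U_{3,n+1}$-restriction of $M/f$, which is a rank-$4$ analog of the base case: one must analyze the lines through $f$ inside a rank-$4$ flat on which $M/f$ restricts to $U_{3,n+1}$, again excluding full lines through the projection bound. The delicate point is to keep the surviving uniform configuration inside $\{U_{2,k}\}\cup\{U_{3,n+1}\}$ rather than letting it escape to a higher-rank uniform flat, since a rank-$4$ flat on which all lines have two points is $U_{4,n+2}$, whose only proper flats are short. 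I expect controlling this higher-rank geometry in the lifting step to be where the genuine work lies.
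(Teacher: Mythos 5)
Your base case is, in essence, the paper's entire proof of this lemma: the paper reads the hypothesis as supplying an $n$-point line $L$ that is a flat of $M/e$, restricts to ``the relevant rank-$3$ flat'' $\cl_M(L\cup e)$, and then runs exactly your analysis --- a $(q+1)$-point line avoiding $e$ would project from $e$ to $q+1$ distinct points of $\si(M/e)$, a $(q+1)$-point line through $e$ is defeated by a transversal line through points on two other rays, and once every line has at most two points the restriction is $U_{3,n+1}$. The problem is everything after that. Your inductive step is never carried out: you explicitly defer the lifting of a $U_{3,n+1}$-restriction of $M/f$ back to $M$ as ``where the genuine work lies.'' As submitted, the proposal therefore proves only the case in which the $U_{2,n}$ is an induced restriction of $M/e$, not the stated induced-minor form.

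Moreover, the gap you identified cannot be closed, for exactly the reason you feared. Take any $q\geq 3$ and $M=U_{4,5}$, which is representable over every field. Then $M/e\cong U_{3,4}$, which has $U_{2,3}$ as an induced minor, so the hypothesis holds with $n=3$; but every proper flat of $U_{4,5}$ is an independent set, so the induced restrictions of $M$ are just $U_{i,i}$ for $i\leq 3$ and $U_{4,5}$ itself, and none of these lies in $\{U_{2,k}:3\leq k\leq q\}\cup\{U_{3,4}\}$. This is precisely the ``escape'' to $U_{r+1,n+1}$ that you anticipated: the target set $\{U_{2,k}:3\leq k\leq q\}\cup\{U_{3,n+1}\}$ is not closed under lifting across a contraction, so no induction of the shape you set up can succeed. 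The paper copes with this differently: this lemma is only ever invoked (through Lemma~\ref{ir=im}) when $M/e$ is itself isomorphic to $U_{2,n}$; the escape to higher-rank uniform matroids is handled by the separate Lemma~\ref{noUrn}, whose conclusion explicitly allows $U_{r+1,n+1}$; and the class $\mathcal{N}$ in the proof of Theorem~\ref{GF(q)ir} is taken large enough --- it contains every $U_{m,m+1}$ --- to absorb the escape. So the correct repair is not to finish your lifting step but to weaken the hypothesis to ``induced restriction'' (which is all the paper's own proof establishes, and all its application requires), leaving the contraction-lifting to Lemmas~\ref{noUrn} and~\ref{ir=im}.
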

\begin{proof}
    It suffices to show that the result holds for $r(M)=3$ since we may restrict to the relevant rank-3 flat. Let $E(M/e)=\{x_1,x_2,\dots,x_k\}$. If $M$ has a $(q+1)$-point line, then $e$ must be on that line, otherwise, $M/e$ would be $U_{2,q+1}$. Without loss of generality, suppose $\cl(\{e,x_k\})$ is a $(q+1)$-point line, $L_1$. Then the line $L_2=\cl(\{x_1,x_2\})$ must intersect $L_1$ at a point different from $e$. Therefore, $L_2$ is either isomorphic to $U_{2,k}$ for some $k$ with $3\leq k\leq q$, in which case the result holds, or $L_2$ is a $(q+1)$-point line, and $M/e$ is isomorphic to $U_{2,q+1}$, a contradiction. Thus, we may assume that $M$ has no $(q+1)$-point lines. Let $X$ be a 3-element subset of $E(M)$. If $X$ is dependent, then, since $M$ is simple having no $(q+1)$-point lines, $M|\cl(X)$ is isomorphic to a member of $\{U_{2,k}: 3\leq k\leq q\}$ and the result holds. We deduce that $X$ is independent. Therefore, $M$ is isomorphic to $U_{3,n+1}$.
\end{proof}

To complete the proof Theorem~\ref{GF(q)ir}, we consider the following general result for uniform matroids.
\begin{lemma}\label{noUrn}
    Let $M$ be $GF(q)$-representable for some $q>2$ and suppose $M/e\cong U_{r,n}$ for some $r$ and $n$ with $2<r<n$. Then $M$ has a member of $\{U_{2+t,k+t}: 3\leq k\leq q \text{ and } 0\leq t\leq q-3\}\cup  \{U_{r,
n}, U_{r+1,n+1}\}$ as an induced restriction. 
\end{lemma}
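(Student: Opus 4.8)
The plan is to realize $M$ as a restriction of the projective geometry $P_{r+1}=PG(r,q)$, so that $r(M)=r+1$ and $e$ is a point of $P_{r+1}$; since $M/e\cong U_{r,n}$ and $M$ is simple, $r(M)=r+1$ automatically and no reduction of rank is needed. Projecting from $e$ partitions $E(M)-e$ among the lines $L_1,\dots,L_n$ of $P_{r+1}$ through $e$ that meet $E(M)-e$, where $L_i$ consists of the elements collapsing to the $i$th point of $U_{r,n}$. Writing $m_i=|L_i\cap E(M)|-1\geq 1$, we have $m_i\leq q$, with $m_i=q$ exactly when $L_i\subseteq E(M)$ (a \emph{full} line). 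If some $m_i$ satisfies $2\leq m_i\leq q-1$, then $M|L_i\cong U_{2,m_i+1}$ is an induced restriction, namely the member $U_{2+t,k+t}$ with $t=0$ and $k=m_i+1\in\{3,\dots,q\}$, and the lemma holds; so I would assume henceforth that every $m_i\in\{1,q\}$.

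The device driving the remaining cases is an observation about rank-$r$ flats of $M$ that avoid $e$. If $F$ is such a flat, then $\cl_{P_{r+1}}(F)$ is a hyperplane of $P_{r+1}$ missing $e$, so it meets each line $L_i$ in a single point $x_i\neq e$, and $x_i\in E(M)$ if and only if $x_i\in F$. Thus $F$ meets every full line in an element of $E(M)$ automatically, while it meets a line with $m_i=1$ in $E(M)$ precisely when $F$ contains that line's representative. As $F$ meets each line it hits in a single element, $M|F\cong U_{r,f}$ with $r\leq f\leq n$, where $f$ is the number of lines met by $F$ in $E(M)$. Hence it suffices either to produce such an $F$ with $f=n$, giving $U_{r,n}$, or to produce one with $r+1\leq f\leq n-1$ while knowing $r\leq q-1$ and $f-r\leq q-2$, which places $U_{r,f}$ in the family $\{U_{2+t,k+t}:3\leq k\leq q,\ 0\leq t\leq q-3\}$.

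Let $S$ be the set of representatives of the lines with $m_i=1$, and $s=|S|$. If $s\leq r$, then $S$ has rank $s$ in $M/e$, which forces $r(S)=s$ and $e\notin\cl(S)$; extending $\cl(S)$ to a rank-$r$ flat $F$ avoiding $e$ yields $f=n$ and $M|F\cong U_{r,n}$. If $s\geq r+1$ and at least one full line is present, then $n\geq s+1\geq r+2$. If $s=n$ (no full lines), then $E(M)=\{e\}\cup S$, and either $r(S)=r$, giving $M|\cl(S)\cong U_{r,n}$, or $M\cong U_{r+1,n+1}$, or $M$ has a dependent $(r+1)$-subset of $S$; such a subset avoids $e$, so it contains a circuit $C_0\subseteq S$ of size $r+1$ with $r(S)=r+1$, whence $n\geq r+2$. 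In every case in which I have not already produced an answer, the corank $n-r$ is therefore at least $2$, and I would invoke the representability bound: contracting $U_{r,n}$, respectively its dual $U_{n-r,n}$, down to a rank-$2$ uniform matroid and using that $U_{2,\ell}$ is $GF(q)$-representable only for $\ell\leq q+1$ gives $n-r\leq q-1$ and $r\leq q-1$. Taking $F$ to be the closure of $r$ independent members of $S$ (full-line subcase) or of the circuit $C_0$ (the $s=n$ subcase), I get $M|F\cong U_{r,f}$ with $r+1\leq f\leq n$; this equals $U_{r,n}$ when $f=n$, and when $f\leq n-1$ it satisfies $f-r\leq q-2$ and $r\leq q-1$, so it lies in the small family.

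The step I expect to be the main obstacle is the presence of full lines. A full line cannot be shortened inside a flat, so once a full line appears the target $U_{r+1,n+1}$ is genuinely unavailable — the only rank-$(r+1)$ flat is $E(M)$ itself — and one is forced to manufacture either $U_{r,n}$ or a deficient $U_{r,f}$ as an induced restriction. The rank-$r$-flat observation is precisely what tames this: full lines are automatically transversal to every rank-$r$ flat avoiding $e$, so they never obstruct the search and merely contribute fixed points to the count $f$. The second delicate point is ensuring that the deficient matroids $U_{r,f}$ with $f<n$ actually lie in the allowed family; this is exactly what the bounds $r\leq q-1$ and $n-r\leq q-1$ — available whenever the corank is at least $2$ — provide, and it is why the corank-$1$ situations must be separated out and resolved directly by $U_{r,n}$ or $U_{r+1,n+1}$.
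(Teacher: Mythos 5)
Your proof is correct, and its engine coincides with the paper's: both arguments rest on the observation that a rank-$r$ flat $F$ of $M$ avoiding $e$ restricts to a uniform matroid $U_{r,|F|}$ (its elements embed injectively into $M/e\cong U_{r,n}$, so a circuit of size at most $r$ inside $F$ would project to a dependent set of size at most $r$ there), together with the representability bounds $n-r\le q-1$ and, once $n-r\ge 2$, also $r\le q-1$, which place any such flat with more than $r$ elements either at $U_{r,n}$ or inside the family $\{U_{2+t,k+t}\}$. Where you genuinely differ is in the architecture and in the treatment of $(q+1)$-point lines. The paper argues by contradiction: assuming no listed matroid occurs, every hyperplane avoiding $e$ is forced to be an independent $r$-set; a $(q+1)$-point line through $e$ would then produce a hyperplane avoiding $e$ that contains a circuit, so no such line exists, all lines through $e$ are trivial, and $M\cong U_{r+1,n+1}$ follows. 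You instead run a direct case analysis on the projection from $e$ and use full lines positively: a hyperplane of $PG(r,q)$ avoiding $e$ automatically meets every full line in a point of $E(M)$, so full lines can only inflate $|F|$ past $r$, which is exactly what you need. The two treatments are essentially contrapositives of one another, but yours is constructive---it exhibits the witness flat in every case---at the price of a longer case split on $s$. Two steps that you assert deserve a line of proof each, though both are true and routine: that a dependent $(r+1)$-subset of $E(M)$ must avoid $e$ (a circuit through $e$ of size at most $r+1$ would project to a dependent set of size at most $r$ in $U_{r,n}$), and that $\cl_M(S)$ extends to a rank-$r$ flat of $M$ avoiding $e$ (extend $S$ to a basis of $M/e$ and take the closure in $M$ of that basis).
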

\begin{proof}
    Assume the result fails. Observe that, since $U_{2,q+2}$ is not $GF(q)$-representable, we must have $n\leq q+r-1$. Next note that if $M$ has a hyperplane $H_0$ that avoids $e$, then $M|H_0$ must be isomorphic to a uniform matroid for if  $H_0$ contains a non-spanning circuit $C$, then $M/e$ must also contain $C$ as a non-spanning circuit. Moreover, $H_0$ can contain at most $n$ elements as $|E(M/e)|=n$. Therefore, $M|H_0$ is isomorphic to $U_{r,s}$ for some $s$ with $r\leq s \leq n$. Suppose that $H_0$ can be chosen so that $s>r$. If $s=n$, then $M$ has $U_{r,n}$ as an induced restriction, a contradiction. Thus we may suppose that $s<n$. Observe that if $r>q-1$, then, since $U_{q,q+2}$ is not $GF(q)$-representable, we must have $n=r+1$, so $s=n$, a contradiction. Thus we may assume that $r\leq q-1$. Then we obtain a contradiction by taking $t=r-2$ and $k=s-r+2$ for then $t\leq q-3$ and $k<n-r+2\leq (q+r-1)-r+2=q+1$.
    Hence we may assume that every hyperplane that avoids $e$ is an independent set of size $r$. 
    
    Let $E(M/e)=\{x_1,x_2,\dots,x_n\}$. 
    If $M$ has a $(q+1)$-point line, this line must contain $e$ as $M/e$ does not have a $(q+1)$-point line. Without loss of generality, suppose $\cl_M(\{e,x_n\})$ is a $(q+1)$-point line, $L$. Then $\cl_M(\{x_1,x_2,\dots,x_r\})$ is a hyperplane $H$ of $M$ that meets $L$ at a point different from $e$. Therefore $H$ is a hyperplane that avoids $e$ and contains a circuit of $M$, a contradiction. We conclude that $M$ has no $(q+1)$-point lines. Therefore, either $e$ is contained in a $U_{2,k}$ induced restriction for some $k$ with $3\leq k\leq q$, and the result holds, or every circuit containing $e$ has size $r+2$. Let $X$ be a subset of $E(M)-e$. If $|X|<r+1$, then $X$ is certainly independent as $X$ must be independent in $M/e$. Suppose $|X|=r+1$ and that $X$ is dependent. Then $\cl_M(X)$ is a hyperplane that avoids $e$ and contains a circuit of $M$, a contradiction. Therefore, $X$ is an independent set of size $r+1$, and $M$ is isomorphic to $U_{r+1,n+1}$. 
\end{proof}

\begin{proof}[Proof of Theorem~\ref{GF(q)ir}] 
It suffices to show the equivalence of (ii) and (iii).   
Let $\mathcal{N}=\{U_{n,n+1}:n\geq 2\}\cup \{U_{2+t,k+t}: 4\leq k\leq q \text{ and } 0\leq t \leq q-3\}\cup\{U_{3,q+2}\}$. Then, since $U_{4,q+3}$ is not $GF(q)$-representable for any $q$, it follows by Lemmas~\ref{noU2k}~and~\ref{noUrn} that $\mathcal{N}$ has the property that if $M/e$ is isomorphic to a member of $\mathcal{N}$, then $M$ has a member of $\mathcal{N}$ as an induced minor. Since $\{U_{2,k}:3\leq k\leq q\}\cup \{U_{3,q+2}\}$ is the set of induced-minor-minimal members of $\mathcal{N}$, the result holds by Lemma~\ref{ir=im}.

Note if $M$ has $U_{2+t,k+t}$ as an induced restriction for some $t$ and $k$ with $t\geq q-2$ and $k\geq 4$, then $M$ has $U_{q,q+2}$ as a minor, so $M$ is not $GF(q)$-representable, a contradiction. 
\end{proof}

Theorem~\ref{gf(p)-chord} follows from Theorem~\ref{GF(q)ir} and a result of Ball \cite{Ball} (see also \cite[Theorem 6.5.21]{ox1}), which establishes precisely when a uniform matroid is $GF(p)$-representable for $p$ a prime.

When $p=3$, Theorem~\ref{gf(p)-chord} gives the following.

\begin{corollary}
    The following are equivalent for a ternary matroid $M$.
    \begin{enumerate}[label=\normalfont(\roman*)]
        \item $M$ is $GF(3)$-chordal.
        \item $M$ has no $M(C_3)$  induced minor.
        \item $M$ has no member of $\{M(C_n): n\geq 3\}$ as an induced restriction.
    \end{enumerate}
\end{corollary}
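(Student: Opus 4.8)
The plan is to obtain the corollary as the specialization of Theorem~\ref{gf(p)-chord} to the prime $p=3$, so no independent argument is needed: I would simply set $p=3$ throughout and simplify each of the three equivalent conditions. The hypotheses already match, since a matroid is ternary exactly when it is $GF(3)$-representable, and condition (i) of both statements is that $M$ be $GF(3)$-chordal. Thus all the work is in rewriting conditions (ii) and (iii).

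For condition (ii), setting $p=3$ reduces the family $\{U_{2,k}:3\leq k\leq p\}$ to the single matroid $U_{2,3}$. Since $U_{2,3}\cong M(C_3)$, condition (ii) of Theorem~\ref{gf(p)-chord} becomes ``$M$ has no $M(C_3)$ as an induced minor,'' matching condition (ii) of the corollary.

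For condition (iii), setting $p=3$ makes the index set of the family $\{U_{2+t,k+t}:4\leq k\leq p,\ 0\leq t\leq p+1-k\}$ empty, because there is no integer $k$ with $4\leq k\leq 3$; hence this family contributes nothing and condition (iii) reduces to forbidding $\{U_{n,n+1}:n\geq 2\}$ as induced restrictions. Using $U_{n,n+1}\cong M(C_{n+1})$, this family is exactly $\{M(C_n):n\geq 3\}$, which gives condition (iii) of the corollary.

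I do not anticipate any real obstacle here, since the substance is carried entirely by Theorem~\ref{gf(p)-chord}. The only steps requiring attention are the two elementary matroid identifications $U_{2,3}\cong M(C_3)$ and $U_{n,n+1}\cong M(C_{n+1})$, together with the observation that the $t$-indexed family in (iii) is vacuous at $p=3$; it is precisely this vacuity that collapses the general (and, as remarked, rather unattractive) list of obstructions down to the clean family of cycle matroids.
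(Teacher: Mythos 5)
Your proposal is correct and matches the paper exactly: the paper presents this corollary as the immediate specialization of Theorem~\ref{gf(p)-chord} to $p=3$, with precisely the simplifications you describe (the family $\{U_{2+t,k+t}\}$ becoming vacuous, and the identifications $U_{2,3}\cong M(C_3)$ and $U_{n,n+1}\cong M(C_{n+1})$).
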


Similarly, Theorem~\ref{GF(q)ir} gives the following when $q=4$.

\begin{corollary}
    The following are equivalent for a matroid $M$ that is representable over $GF(4)$.
    \begin{enumerate}[label=\normalfont(\roman*)]
        \item $M$ is $GF(4)$-chordal.
        \item $M$ has no member of $\{U_{2,3},U_{2,4},U_{3,6}\}$ as an induced minor.
        \item $M$ has no member of $\{U_{n,n+1}:n\geq 2\}\cup \{U_{2,4},U_{3,5},U_{3,6}\}$ as an induced restriction.
    \end{enumerate}
\end{corollary}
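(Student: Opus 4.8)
The plan is to obtain this corollary as the $q=4$ specialization of Theorem~\ref{GF(q)ir}, since a matroid is representable over $GF(4)$ precisely when the hypothesis of that theorem holds with $q=4$. The only work is to verify that the three parametrized index sets appearing in Theorem~\ref{GF(q)ir} collapse, at $q=4$, to the three explicit lists in the corollary; once this is checked, the equivalence of (i)--(iii) is immediate from the theorem. Condition (i) requires no computation, being verbatim condition (i) of Theorem~\ref{GF(q)ir}.

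Next I would evaluate the excluded-induced-minor set in (ii). At $q=4$, the constraint $2<k\leq q$ forces $k\in\{3,4\}$, so $\{U_{2,k}:2<k\leq q\}=\{U_{2,3},U_{2,4}\}$, and $U_{3,q+2}=U_{3,6}$. Hence the set is $\{U_{2,3},U_{2,4},U_{3,6}\}$, matching the corollary. Then I would evaluate the excluded-induced-restriction set in (iii). The family $\{U_{n,n+1}:n\geq 2\}$ is independent of $q$. In the family $\{U_{2+t,k+t}:4\leq k\leq q,\ 0\leq t\leq q-3\}$, putting $q=4$ forces $k=4$ and $t\in\{0,1\}$, which yields $\{U_{2,4},U_{3,5}\}$; together with $U_{3,q+2}=U_{3,6}$ this gives $\{U_{n,n+1}:n\geq 2\}\cup\{U_{2,4},U_{3,5},U_{3,6}\}$, again matching the corollary.

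There is essentially no obstacle here; the argument is a direct substitution into Theorem~\ref{GF(q)ir}. The only subtlety worth recording is that none of the matroids listed in (iii) is silently discarded for failing to be $GF(4)$-representable. The remark closing the proof of Theorem~\ref{GF(q)ir} removes only those $U_{2+t,k+t}$ with $t\geq q-2$, since such a matroid has $U_{q,q+2}$ as a minor; at $q=4$ this threshold is $t\geq 2$, whereas every term in (iii) has $t\leq 1$. Thus all listed matroids genuinely appear, and the three conditions of the corollary are exactly conditions (i)--(iii) of Theorem~\ref{GF(q)ir} at $q=4$, so the equivalences follow at once.
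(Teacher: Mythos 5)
Your proposal is correct and matches the paper exactly: the paper derives this corollary by direct substitution of $q=4$ into Theorem~\ref{GF(q)ir}, and your evaluation of the index sets ($k\in\{3,4\}$ in (ii); $k=4$, $t\in\{0,1\}$ in (iii)) reproduces the stated lists. The check that no listed matroid fails $GF(4)$-representability is a sensible added verification, consistent with the paper's closing remark in the proof of Theorem~\ref{GF(q)ir}.
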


\section{Perfect Elimination Ordering}\label{Perfect Elimination Ordering}

In this section, we will prove Theorem~\ref{perfectisgfq}. Recall that a perfect elimination ordering of cocircuits of a matroid $M$ is a collection of sets $C_1^*,C_2^*,\dots,C_r^*$ such that, for all $i$ in $[r]$, the set $C_i^*$ is a cocircuit of the matroid $M_i=M\backslash (C_1^*\cup C_2^*\cup \dots \cup C_{i-1}^*)$ and $M|\cl_{M_i}(C_i^*)$ is a projective geometry. Observe that $M|\cl_{M_i}(C_i^*)=M_i|\cl_{M_i}(C_i^*)$ and that $E(M_i)$ is a flat of $M$ of rank $r(M)-i+1$. We omit the straightforward proof of the next result.

\begin{lemma}\label{perfectPG}
    Projective geometries have a perfect elimination ordering of cocircuits.
\end{lemma}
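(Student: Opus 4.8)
The plan is to build the ordering by peeling off cocircuits along a complete flag of flats, which amounts to an induction on the rank $r$ of $P_r=PG(r-1,q)$. The two facts that drive the argument are that a cocircuit of a matroid is exactly the complement of a hyperplane, and that in a projective geometry every flat is itself a projective geometry, so that the condition ``$M|\cl_{M_i}(C_i^*)$ is a projective geometry'' will hold automatically once we know the relevant $M_i$ is a projective geometry.

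First I would fix a maximal chain of flats $\emptyset=F_0\subset F_1\subset\cdots\subset F_r=E(P_r)$ with each $F_i$ of rank $i$, so that $P_r|F_i\cong PG(i-1,q)$; such a flag exists in any projective geometry. Setting $C_i^*=F_{r-i+1}-F_{r-i}$ for $i$ in $[r]$, I claim that $C_1^*,C_2^*,\dots,C_r^*$ is a perfect elimination ordering of cocircuits.

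For the verification, a short induction on $i$ shows that $M_i=P_r\backslash(C_1^*\cup\cdots\cup C_{i-1}^*)=P_r|F_{r-i+1}$, since deleting $C_{i-1}^*=F_{r-i+2}-F_{r-i+1}$ from $P_r|F_{r-i+2}$ leaves exactly $P_r|F_{r-i+1}$. Thus $M_i\cong PG(r-i,q)$, and within it $F_{r-i}$ is a hyperplane whose complement is precisely $C_i^*$; hence $C_i^*$ is a cocircuit of $M_i$. Finally, $\cl_{M_i}(C_i^*)$ is a flat of the projective geometry $M_i$, so $M_i|\cl_{M_i}(C_i^*)$ is a projective geometry, and by the remark preceding the lemma this coincides with $M|\cl_{M_i}(C_i^*)$, which gives the last required property.

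There is essentially no serious obstacle here---the result is genuinely straightforward, which is why the authors omit it---but the one point to stay alert to is the distinction between closures and restrictions taken in $M=P_r$ versus in $M_i$. Because each $E(M_i)=F_{r-i+1}$ is a flat of $P_r$, restricting $P_r$ to any subset of $F_{r-i+1}$ agrees with restricting $M_i$, so the remark handles all the bookkeeping and no genuine discrepancy arises; the base case $r=0$ is the empty ordering and is immediate.
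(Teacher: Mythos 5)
Your proof is correct: the flag construction gives $M_i=P_r|F_{r-i+1}$, each $C_i^*$ is the complement of the hyperplane $F_{r-i}$ in $M_i$ and hence a cocircuit, and since $\cl_{M_i}(C_i^*)$ is a flat of the projective geometry $M_i$, the restriction $M|\cl_{M_i}(C_i^*)=M_i|\cl_{M_i}(C_i^*)$ is itself a projective geometry. The paper omits its proof as straightforward, and your peeling-off-a-flag argument is exactly the natural argument being alluded to, so there is nothing of substance to compare.
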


We now prove the second main result of the paper.

\begin{proof}[Proof of Theorem \ref{perfectisgfq}]
    Suppose $M$ is a $GF(q)$-chordal matroid that does not have a perfect elimination ordering of cocircuits and has $|E(M)|$ a minimum among such matroids. Since $M$ is $GF(q)$-chordal, $M$ may be written as a $P_N(M_1,M_2)$ where $N$ is a projective geometry and $M_2$ is a projective geometry. Therefore, $M_2$ has a perfect elimination ordering of cocircuits. Choose $C_1^*$ to be a cocircuit of $M_2$ that avoids $E(N)$. By the minimality of $M$, the matroid $M\backslash C_1^*$ has a perfect elimination ordering of cocircuits $C_2^*,C_3^*,\dots,C_r^*$. Therefore, $M$ has a perfect elimination ordering.

    Suppose $M$ has a perfect elimination ordering of cocircuits labeled $C_1^*,C_2^*,\dots,C_r^*$. Let $M_1=M$, and let $M_i=M\backslash (C_1^*\cup C_2^*\cup \dots \cup C_{i-1}^*)$ for each $i$ with $2\leq i\leq r$. Certainly, $M_r$ is a projective geometry, and therefore is $GF(q)$-chordal. Let $k$ be the smallest integer such that $M_k$ is not $GF(q)$-chordal. Then $M_{k+1}$ is $GF(q)$-chordal, $M_k|\cl_{M_k}(C_k^*)$ is a projective geometry, and $M_{k+1}$ is a hyperplane of $M_k$. Thus $E(M_{k+1})\cap E(M_k|\cl_{M_k}(C_k^*))$ is a flat, $N$, of a projective geometry and so $M|N$ is a projective geometry. Hence $M_k$ is a generalized parallel connection of $GF(q)$-chordal matroids across a projective geometry, namely $P_{M|N}(M_{k+1},M|\cl_{M_k}(C_k^*))$. Thus, $M_k$ is $GF(q)$-chordal, a contradiction.
\end{proof}

On combining Theorems~\ref{noC4K4} and \ref{perfectisgfq}, we obtain the following.

\begin{corollary}
    The following are equivalent for a binary matroid $M$.
    \begin{enumerate}[label=\normalfont(\roman*)]
        \item $M$ is $GF(2)$-chordal.
        \item $M$ has no member of $\{M(C_4),M(K_4)\}$ as an induced minor.
        \item $M$ has no member of $\{M(C_n) : n\geq 4\}\cup \{M(K_4),M^*(K_{3,3})\}$ as an induced restriction.
        \item $M$ has a perfect elimination ordering of cocircuits.
    \end{enumerate}
\end{corollary}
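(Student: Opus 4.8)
The plan is to obtain this corollary as a direct synthesis of the two main theorems already proved, since its four conditions partition naturally into the content of those theorems. No new combinatorial argument is required; the work is entirely one of assembly and of verifying that the terminology is compatible across the two sources.

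First I would apply Theorem~\ref{noC4K4}, which asserts precisely that conditions (i), (ii), and (iii) are equivalent for a binary matroid $M$. This disposes of the equivalence (i)$\Leftrightarrow$(ii)$\Leftrightarrow$(iii) with nothing further to prove, and in particular it already packages together the induced-minor characterization of (ii) and the induced-restriction characterization of (iii).

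Next I would apply Theorem~\ref{perfectisgfq} in the case $q=2$. That theorem states that a matroid is $GF(q)$-chordal exactly when it admits a perfect elimination ordering of cocircuits, so its $q=2$ instance yields (i)$\Leftrightarrow$(iv) for our binary $M$. Chaining this equivalence with the one supplied by Theorem~\ref{noC4K4} shows that all four statements are mutually equivalent, which completes the proof.

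The only point meriting care, and it is a minor bookkeeping matter rather than a genuine obstacle, is the compatibility of the two framings. In condition (iv) the definition of a perfect elimination ordering of cocircuits requires each $M|\cl_{M_i}(C_i^*)$ to be a projective geometry without explicit reference to a field; but since $M$ is binary, every restriction of $M$ is binary, so each such projective geometry is automatically a binary projective geometry and the ordering is precisely the $q=2$ situation contemplated by Theorem~\ref{perfectisgfq}. Likewise, the notion of $GF(2)$-chordality in condition (i) is by definition the $q=2$ specialization of the $GF(q)$-chordality appearing in that theorem. Once this consistency is recorded, the corollary follows at once from the two cited results.
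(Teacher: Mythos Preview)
Your proposal is correct and matches the paper's approach exactly: the paper simply states that the corollary follows by combining Theorems~\ref{noC4K4} and~\ref{perfectisgfq}, which is precisely what you do. Your additional remark about the compatibility of the $q=2$ framing is a harmless elaboration that the paper omits.
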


\end{document}